\documentclass[12pt]{article}

\newcommand{\comment}[1]{}

\usepackage{amssymb,amsthm,amsmath,amsfonts}

\usepackage[linesnumbered,lined,boxed,commentsnumbered]{algorithm2e}
\usepackage{tikz}
\DeclareMathOperator{\PG}{\rm PG}
\DeclareMathOperator{\AG}{\rm AG}

\newtheorem{theorem}{Theorem}[section]
\newtheorem{corollary}[theorem]{Corollary}
\newtheorem{lemma}[theorem]{Lemma}

\theoremstyle{definition}
\newtheorem{definition}[theorem]{Definition}
\newtheorem{example}[theorem]{Example}

\numberwithin{table}{section}

\newcommand{\round}{{\rho}}

\comment{

}
\comment{

}

\title{Burning Steiner Triple Systems}
\author{Andrea C.~Burgess\thanks{Department of Mathematics and Statistics, University of New Brunswick, Saint John, NB, Canada.  andrea.burgess@unb.ca} \and Peter H.~Danziger\thanks{Department of Mathematics, Toronto Metropolitan University, Toronto, ON, Canada. danziger@torontomu.ca} \and Caleb W.~Jones\thanks{Department of Mathematics, Toronto Metropolitan University, Toronto, ON, Canada. caleb.w.jones@torontomu.ca}
\and
Trent G.~Marbach\thanks{Department of Mathematics and Statistics, Acadia University, Wolfville, NS, Canada. trent.marbach@gmail.com} \and David A.~Pike\thanks{Department of Mathematics and Statistics, Memorial University of Newfoundland, St.~John's, NL, Canada.  dapike@mun.ca}}

\begin{document}

\date{\today}
\maketitle

\begin{abstract}
Graph burning is a round-based process which can be viewed as a discrete one-player game that models the spread of influence throughout a network. 
Extending this process to hypergraphs can be done in numerous ways; two such processes that have been studied include the burning process and the lazy burning process for hypergraphs. 
Combinatorial designs can be thought of as hypergraphs with useful  and interesting characteristics. 
In this paper we explore the burning and lazy burning processes in Steiner triple systems (STSs). 
We obtain logarithmic bounds on the burning number of an arbitrary STS and prove the existence of an STS with burning number $\rho$ for any integer $\rho \geq 3$. We observe that the concept of lazy burning is equivalent to the notion of the dimension of an STS, and we consider ramifications of this equivalence. 
We also show that the difference between the burning and lazy burning number of a Steiner triple system can be arbitrarily large.
Finally, we consider burning and lazy burning numbers of affine and projective triple systems in detail.
\end{abstract}

{\bf Keywords:}  Steiner triple systems, hypergraph burning, lazy burning, dimension of triple systems, projective and affine triple systems

{\bf MSC2020:} 05B07, 05C57, 05C65

\section{Introduction}
\label{intro}

Burning is a process that models the spread of self-propagating entities such as information or contagion in networks; as is common, we use the analogy of fire to represent the entity.
Typically, burning takes place over a discrete sequence of rounds. In each round, an {\em arsonist} selects and {\em burns} a node or vertex of the network, which then becomes permanently {\em burned}. Additionally, in each round, the fire propagates from burned vertices to adjacent unburned vertices.  All burned vertices remain burned until the end of the process.
The usual goal of the arsonist is to burn every node of the network in as few rounds as possible. 

As noted by Alon in 1992~\cite{Alon1992},
burning arose when Brandenburg and Scott described a problem at Intel to model the transfer of information through processors; their situation was equivalent to burning an $n$-dimensional hypercube graph.  
Independently, in a 2014 paper, Bonato, Janssen, and Roshanbin~\cite{BJR_WAW,BJR_InternetMath} introduced graph burning in general as a combinatorial process which can be applied to any graph; they also coined the term ``graph burning'' and developed much of the early theory on the topic. 
They posed the Burning Number Conjecture, which asserts that any connected graph $G$ on $n$ vertices can be burned in at most $\bigl\lceil \sqrt{n}\: \bigr\rceil$ rounds.
Currently it is known that $b(G) \leq \sqrt{4n/3}+1$, 
where $b(G)$ denotes the fewest number of rounds needed to burn the connected graph $G$,
and also that the Burning Number Conjecture holds asymptotically (i.e., $b(G) \leq (1+o(1)) \sqrt{n}$)~\cite{BBBCKPR_2023,NT_2024}.
For additional background on graph burning, we recommend the survey paper~\cite{Bonato_survey} as well as Chapter~4 of~\cite{Bonato_book}.

The concept of burning has recently been extended from graphs to hypergraphs~\cite{introducing_hyp_burn,mythesis}.
A \emph{hypergraph} $H$ is an ordered pair $(V, E)$ where $V$ and $E$ are disjoint sets, 
and each element of $E$ is a non-empty subset of $V$. The elements of $V=V(H)$ are called \emph{vertices}, and the elements of $E=E(H)$ are called \emph{edges} or \emph{hyperedges}. Hypergraphs are generalizations of graphs, whereby edges can contain any number of vertices; notice that every graph is also a hypergraph in which all edges have cardinality~2.
A hypergraph is $k$\emph{-uniform} if every edge contains exactly $k$ vertices. 
A hypergraph is \emph{linear} if any two distinct edges intersect in at most one vertex.
For more information about hypergraphs, we recommend~\cite{bahm_sanj,hypergraph_text,conn_in_hyp}. 

In the present paper we focus our attention on burning Steiner triple systems.  Steiner triple systems are a well known family of combinatorial designs, which can also be viewed as a family of hypergraphs.
There is a vast literature on this topic; we refer the reader to~\cite{triple_systems_text} for more information on Steiner triple systems specifically and to~\cite{designs_handbook,design_text} for more information on combinatorial designs in general.
For our purposes, a {\em design} is a linear hypergraph  $(V,\cal{B})$ 
in which every pair of vertices from $V$ 
appears in {\em exactly} one edge of $\cal{B}$. 
With this definition, a {\em partial design} is a linear hypergraph in which each pair of vertices appears in at most one edge of $\cal{B}$.

A {\em Steiner triple system} (STS) is a 3-uniform design, and a \emph{partial Steiner triple system} is a 3-uniform linear hypergraph. As is common when talking about designs, and STSs in particular, we will often refer to vertices as {\em points} and edges as {\em blocks} or {\em triples}. 
We denote a Steiner triple system by STS$(v)$, where $v=|V|$ is the number of points and is called the {\em order} of the system.
It is well known that an STS$(v)$ exists if and only if $v\equiv 1,3 \pmod 6$ \cite{kirkman_1847}. Accordingly, if $v\equiv 1,3 \pmod 6$ we will call $v$ \emph{admissible}, and otherwise we will call $v$ \emph{inadmissible}. 
If $G=(X,\mathcal{A})$ and $H=(Y,\mathcal{B})$ are Steiner triple systems 
such that $X \subseteq Y$ and $\mathcal{A} \subseteq \mathcal{B}$, then $G$ is said to be a \emph{subsystem} of $H$ and we write $G \subseteq H$. If $X \subset Y$ and $\mathcal{A} \subset \mathcal{B}$ then $G$ is a \emph{proper subsystem} of $H$ and we write $G \subset H$.

We now give a detailed description of burning in the context of a general hypergraph $H$, which was defined in~\cite{introducing_hyp_burn,mythesis}.
Burning is a round-based process in which the rounds are indexed by positive integers. The set of vertices that are burned by the end of round $\round$ is denoted by $F_H(\round)$; by convention we define $F_H(0)=\emptyset$. For convenience, when $H$ is clear from context we will simply write $F(\round)$.
During each round $\round \geq 1$, the following two things happen simultaneously:

\begin{itemize}
\item 
For each $u \in V(H) \setminus F({\round-1})$, if there exists a non-singleton edge $e \in E(G)$ such that $u \in e$ and $(e \setminus \{u\}) \subseteq F(\round-1)$, then the vertex $u$ becomes burned. Vertices burned in this way are said to be burned by {\em propagation}.
\item The arsonist chooses a vertex $u_\round \in V(H)\setminus F(\round-1)$ and burns it.  The vertex $u_\round$ is called a {\em source}.
\end{itemize}

As previously mentioned, once a vertex is burned, either through the action of the arsonist or by propagation, it remains permanently burned. In the initial round, no vertices become burned through propagation, and the arsonist chooses the first source. 
In subsequent rounds, the arsonist is allowed to choose as a source a vertex that would also become burned due to propagation that round; such sources are called {\em redundant}. Note that it is never advantageous for the arsonist to choose a redundant source if there is another option, but the arsonist may be forced to choose a redundant source in the final round.

During round $\round$, a vertex $u$ is called {\em valid} if $u \notin F(\round-1)$, i.e.\ $u$ is not burned at the end of round $\round-1$, and {\em invalid} otherwise.
Hence the arsonist may only select valid vertices as sources.
Note that every redundant source is also valid. A sequence $(u_1,u_2,\ldots,u_k)$ of vertices is called {\em valid} if $u_\round$ is valid during round $\round$.  A {\em burning sequence} is a valid sequence of sources $(u_1,u_2,\ldots,u_k)$ that leaves the hypergraph completely burned when the arsonist burns $u_\round$ in round~$\round$, for $\round=1,2,\ldots,k$. 
A hypergraph may have  burning sequences of different lengths, as well as different burning sequences of minimum length.
The \textit{burning number} of a hypergraph $H$, denoted $b(H)$, is the earliest round at which $H$ could possibly be completely burned, or equivalently, the length of a shortest burning sequence for $H$. Thus the burning number serves as a measure of how fast the fire can possibly spread to all vertices of $H$.

Note that if a hypergraph $H$ is 2-uniform, and so $H$ is a graph, the hypergraph burning process agrees with the concept of graph burning given in~\cite{BJR_WAW,BJR_InternetMath}, as does the notion of the burning number.
However, in the case of hypergraphs we may define an alternate type of burning called {\em lazy burning} \cite{introducing_hyp_burn,mythesis}. In this case, the arsonist initially chooses a set of points $S\subseteq V$ to be burned.  Subsequently, the fire spreads in each round by propagation but the arsonist does not burn any additional sources. If all of the points eventually become burned, then $S$ is called a {\em lazy burning set}. The smallest size of a lazy burning set for a hypergraph $H$ is called the {\em lazy burning number} of $H$ and is denoted $b_L(H)$. In a graph $G$, $b_L(G)$ is equal to the number of components, but finding $b_L(H)$ for an arbitrary hypergraph $H$ is nontrivial.

Hypergraph burning has received a spate of attention in the recent literature.  The introductory paper~\cite{introducing_hyp_burn} presents various bounds on the burning and lazy burning numbers of a hypergraph and asks a number of open questions for further research.  One of these questions concerns the complexity of computing the lazy burning number, which is shown to be NP-complete in~\cite{BurningMatchingZeroForcing}; further computational aspects are considered in a recent preprint~\cite{AhadiBiniaz}.  The paper~\cite{BurningMatchingZeroForcing} also examines the connection between lazy burning and the zero forcing process; this relationship is further explored in~\cite{AbiadMallee}. 
The paper~\cite{Proportional} introduces alternative rules for propagation based on the proportion of burning vertices in a hyperedge.
The focus of the present paper is burning and lazy burning of a class of combinatorial design, in this case Steiner triple systems.  In this vein, previous work has considered burning hypergraphs formed from Latin squares~\cite{BJMM} and block designs~\cite{AhadiBiniaz}.

When focusing on Steiner triple systems rather than hypergraphs in general, lazy burning has also arisen under some other guises, usually in relation to the subsystem structure of the Steiner triple system. 
For example, the \emph{dimension} of a Steiner triple system $H$, denoted $\dim(H)$, is the maximum integer $d$ such that any set of $d$ points in $H$ is contained within a proper subsystem of $H$. 
See~\cite{Valle_Dukes, HiltonTeirlinck1980}, as well as Chapter~6 of~\cite{ triple_systems_text}, for a comprehensive discussion.
We discuss the relationship between dimension and lazy burning in Section~\ref{sec:LazyBurning}, showing in Theorem~\ref{thm:dim_lazy_equiv} that the lazy burning number of $H$ is exactly $\dim(H)+1$.

Another related topic that has received recent attention is the {\em spreading property} for Steiner triple systems. 
This was introduced by Bl\'{a}zsik and Nagy in~\cite{BlazsikNagy} and was further studied by Nagy and Szemer\'{e}di in~\cite{NagySzemeredi}.  
A \emph{spreading set} of a Steiner triple system $H$ is a subset of the points of $H$ that is not contained by any proper subsystem of $H$.
Nagy and Szemer\'{e}di concentrated their investigation on minimal spreading sets, and established upper bounds on their size.
They also established other results, including showing that if all of the minimal spreading sets of a triple system are large, then the system is necessarily a projective space.
It should be noted that a minimal spreading set for a Steiner triple system $H$ may have larger cardinality than a spreading set of minimum size.  In particular, for each integer $t>3$, there exists a Steiner triple system containing a minimal spreading set of size $3$ (which is necessarily minimum) as well as one of size $t$~\cite{NagySzemeredi}.
Although a lazy burning set for a Steiner triple system is a spreading set, 
the results in the present paper are distinguished from those of~\cite{BlazsikNagy,NagySzemeredi} because our focus is on is on minimum lazy burning sets rather than minimal ones.  

Lazy hypergraph burning is also equivalent to a process known as $H$\emph{-bootstrap percolation}, on a hypergraph $H$, which was introduced in \cite{bootstrap_paper_1}. 
For a uniform hypergraph, the $r$-\emph{line bootstrap percolation process} is a generalization of $H$-bootstrap percolation in which $r$ burned vertices in a hyperedge cause the hyperedge to burn, so lazy burning in a $k$-uniform hypergraph corresponds with the case $r=k-1$. This process has been studied for the $d$-dimensional lattice~\cite{LinePercolation} and for finite projective planes~\cite{percolation_planes}. 
However, results in this area tend to be probabilistic (for example,~\cite{MorrisonNoel2021}) or extremal (for example,~\cite{MorrisonNoel2018}) in nature, and $H$-bootstrap percolation has not been considered in the context of Steiner triple systems. 
Our results, expressed in the language of lazy burning, therefore give results on $2$-line $H$-bootstrap percolation, where $H$ a Steiner triple system.

The remainder of this paper organized as follows.
In Section~\ref{sec:burnSTS} we establish several results about the burning numbers of Steiner triple systems.
In Theorem~\ref{thm:AllBurningNumbers} we prove that for any given integer $\round \geq 3$ there exists a Steiner triple system $H$ with burning number $b(H) = \round$.
Section~\ref{sec:burnSTS} culminates with Theorem~\ref{thm:ConfinedBurningNumber}, which gives upper and lower bounds the burning number of an arbitrary STS$(v)$.

In Section~\ref{sec:LazyBurning} we consider lazy burning of Steiner triple systems. We review the literature on dimension and discuss its connection to lazy burning. In particular, this gives a logarithmic bound on the lazy burning number of a Steiner triple system (Theorem~\ref{lazy_upper_bound_sts}) and shows that for any integer $\ell \geq 3$, an STS$(v)$ with lazy burning number $\ell$ exists whenever $v$ is a sufficiently large admissible order (Theorem~\ref{thm:AJWH1977}).  We further discuss lazy burning of a triple system in the context of its subsystems and show that the difference between the burning and lazy burning number of a Steiner triple system can be arbitrarily large.

In Section~\ref{sec:Affine and Projective Geometries} we consider affine and projective triple systems specifically. The lazy burning numbers of these designs have previously been considered in the context of dimension.  Hilton and Teirlinck~\cite{HiltonTeirlinck1980} noted their values without proof and Zeitler~\cite{Zeitler1987} gave a formal proof in the projective case.  We give a proof for both cases using the language of lazy burning.  Moreover, we determine the burning numbers of affine and projective triple systems.

In Section~\ref{sec:Discussion} we conclude with some discussion and open questions.

\section{Burning Steiner Triple Systems}
\label{sec:burnSTS}

In this section, we examine the behaviour of the round-based burning process on Steiner triple systems.  For the purposes of the following results, denote the total number of vertices that are burned in a hypergraph $H$ at the end of round $\round$ by $f_H(\round)$, so $f_H(\round) = |F_H(\round)|$.   
We use $f'_H(\round)$ to denote the total number of edges whose vertices are all burned at the end of round $\round$, and refer to such edges as being {\em fully burned} at round $\round$ and thereafter.  
For the sake of simplicity, we may often write $f_H(\round)$ and $f^\prime_H(\round)$ as $f(\round)$ and $f^\prime(\round)$ 
respectively when $H$ is clear from context. 

\begin{lemma}
\label{first4rounds}
Let $H$ be an STS$(v)$ with $v > 7$. For any valid  burning sequence on $H$, if the arsonist does not choose a redundant source in the first four rounds, then $f(1)=1$, $f(2)=2$, $f(3)=4$, and $f(4)=8$. Also, $f^\prime(1)=f^\prime(2)=0$, $f^\prime(3)=1$, and $4\leq f^\prime(4)\leq7$.
\end{lemma}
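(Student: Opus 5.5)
We prove the statement by tracking, round by round, which points and triples are burned. Throughout we use the two defining properties of an STS$(v)$: every pair of points lies in exactly one triple, and any two triples meet in at most one point.

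\emph{Rounds 1 and 2.} In round 1 nothing propagates, so $F(1)=\{u_1\}$. In round 2, propagation needs a triple with two points already burned, and only one point is burned, so nothing propagates. Since $u_2$ is not redundant, $F(2)=\{u_1,u_2\}$. No triple is fully burned at either stage, so $f'(1)=f'(2)=0$.

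\emph{Round 3.} The pair $\{u_1,u_2\}$ lies in a unique triple $\{u_1,u_2,x\}$, so $x$ burns by propagation. No other point does, because propagation requires a triple containing two points of $F(2)$. Non-redundancy of $u_3$ gives $u_3\neq x$, so $f(3)=4$. Only the triple $\{u_1,u_2,x\}$ is fully burned, since any other triple meets $F(3)$ in at most one point of $\{u_1,u_2,x\}$, together with possibly $u_3$. Hence $f'(3)=1$.

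\emph{Round 4.} Write $S=F(3)=\{u_1,u_2,x,u_3\}$, where $u_3$ lies off the triple $T=\{u_1,u_2,x\}$. The pairs inside $S$ not covered by $T$ are $\{u_3,u_1\}$, $\{u_3,u_2\}$ and $\{u_3,x\}$. Each lies in its own triple, with third points $y_1,y_2,y_3$. These third points are distinct, because two such triples share $u_3$ and would otherwise share a second point. They avoid $S$, because each triple meets $T$ in exactly one point. So exactly three new points burn by propagation. Together with the non-redundant $u_4\notin S\cup\{y_1,y_2,y_3\}$, this gives $f(4)=4+3+1=8$. (The hypothesis $v>7$ guarantees such a $u_4$ exists.)

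For the edges: the four triples $T,\{u_1,u_3,y_1\},\{u_2,u_3,y_2\},\{x,u_3,y_3\}$ are fully burned by the end of round 4, so $f'(4)\ge 4$.

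For the upper bound $f'(4)\le 7$, count triples inside the 8-set $F(4)$. We first bound those inside the 7 points $S\cup\{y_1,y_2,y_3\}$. Any triple in this set other than the four above must avoid $u_3$, since all pairs through $u_3$ are already used. It also meets $T$ in at most one point. So it consists of one point of $T$ and two of the $y_i$, or all three $y_i$. Each pair $\{y_i,y_j\}$ lies in only one triple, and the three such pairs can be covered either by one triple $\{y_1,y_2,y_3\}$ or by three triples of the form $\{y_i,y_j,t\}$ with $t\in T$. This gives at most $3$ further triples, hence at most $7$ in total; the extreme case is a Fano subsystem.

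Next, triples containing $u_4$ together with two points of the 7-set would push the count past 7. The key observation is this: if $u_4$ were the third point of a triple on some pair of burned points, then $u_4$ would have burned by propagation in round 4, making $u_4$ redundant. Every pair within the 7-set has its third point determined, and it is not $u_4$ precisely because $u_4$ is non-redundant. Hence no triple contains $u_4$ and two burned points, and $f'(4)\le 7$.

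The delicate step is this final edge count, specifically the careful bookkeeping of which pairs among the seven points are already covered. I would also double-check the edge case where the seven points form an STS$(7)$ subsystem. There the $v>7$ hypothesis ensures $u_4$ exists, and the count of 7 is attained.
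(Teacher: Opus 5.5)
The proof has a genuine gap in its last step, and the bound it aims at is in fact false. Your handling of rounds 1--4 is fine. So are the values of $f'(1)$, $f'(2)$, $f'(3)$ and the bound $f'(4)\ge 4$. Your count of at most seven triples inside the $7$-set $S\cup\{y_1,y_2,y_3\}$ is also correct, and you rightly include the candidate $\{y_1,y_2,y_3\}$.

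The step that fails is the claim that no triple contains $u_4$ and two burned points. Non-redundancy of $u_4$ only says that $u_4$ is not the third point of a triple on a pair inside $F(3)=S$. The points $y_1,y_2,y_3$ catch fire in round 4 itself, so pairs such as $\{t,y_i\}$ with $t\in T$, or $\{y_i,y_j\}$, have had no chance to propagate. Nothing prevents their third point from being $u_4$, and this does happen.

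Take the STS$(9)$ of Example~\ref{GenBurnSeq-STS(9)} with the non-redundant sources $(u_1,u_2,u_3,u_4)=(1,2,4,5)$. Then $x=3$, $(y_1,y_2,y_3)=(7,9,8)$ and $F(4)=\{1,2,3,4,5,7,8,9\}$. All eight blocks avoiding $6$ are fully burned:
\begin{itemize}
\item the four base triples $\{1,2,3\}$, $\{1,4,7\}$, $\{2,4,9\}$, $\{3,4,8\}$;
\item $\{7,8,9\}=\{y_1,y_2,y_3\}$;
\item the three blocks through $u_4$: $\{1,5,9\}$, $\{2,5,8\}$, $\{3,5,7\}$.
\end{itemize}
So $f'(4)=8$.

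No repair of your final paragraph can give $7$. What is provable is $4\le f'(4)\le 8$. Inside an $8$-set, each point lies in at most $\lfloor 7/2\rfloor=3$ triples of a linear $3$-uniform hypergraph, so there are at most $8\cdot 3/3=8$ triples, and the example shows this is attained.

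The paper's own proof has the same defect. It asserts that the only further candidates are $\{u_1,c,d\}$, $\{u_2,b,d\}$ and $\{a,b,c\}$. That list overlooks both $\{b,c,d\}$ and the triples through $u_4$, which are exactly the ones realized above. The lemma is not invoked elsewhere in the paper, so nothing downstream depends on the incorrect value $7$.
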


\begin{proof}
Let $H=(V,{\cal B})$ and
observe Figure \ref{ex_3}, in which the first four sources in a burning sequence for an arbitrary STS are $u_1$, $u_2$, $u_3$, and $u_4$. Clearly only the first source $u_1$ is burned in round 1, so $f(1)=1$ and $f^\prime(1)=0$. In round 2, there is no propagation and the arsonist picks a second source, $u_2$. Since $u_1$ and $u_2$ are the only vertices that are burned at the conclusion of round 2, we have $f(2)=2$ and $f^\prime(2)=0$. 

Let $a$ be the unique point of $V$ such that $\{u_1,u_2,a\}$ is a triple in ${\cal B}$, and so in round $3$, $a$ is burned through propagation and the arsonist picks a non-redundant source $u_3 \neq a$. In round 3, $u_1,u_2,u_3$, and $a$ are burned, and the triple $\{u_1,u_2,a\}$ is fully burned, so $f(3)=4$ and $f^\prime(3)=1$. 

Now, the source $u_3$ appears in a distinct triple of ${\cal B}$ for each of $u_1,u_2$, and $a$. Let these edges be $\{u_3,u_1,b\}$, $\{u_3,u_2,c\}$, and $\{u_3,a,d\}$. Note that $b$, $c$, and $d$ are all distinct from each other and from the vertices $u_1$, $u_2$, and $a$, as depicted in Figure \ref{ex_3}. The arsonist now chooses some vertex $u_4$ not in the set $\{ u_1,u_2,u_3,a,b,c, d\}$ as a source in this round, so $f(4)=8$. Observe that it is possible that $\{u_1,u_2,a\}$, $\{u_3,u_1,b\}$, $\{u_3,u_2,c\}$, and $\{u_3,a,d\}$ are the only fully burned triples at the end of round 4, so $4\leq f^\prime(4)$. 
The only other triples that could possibly be fully burned are  
$\{u_1,c,d\}$, $\{u_2,b,d\}$, and $\{a,b,c\}$, if they exist within $\cal B$,
 and hence $f^\prime(4)\leq7$.
\end{proof}

\begin{figure}[ht]

\centering
\begin{tikzpicture}[scale=1.1]

\node (u1) at (0.07,1) {};
\fill [fill=black] (u1) circle (0.1) node [below] {$u_1$};
\node (u2) at (1,0) {};
\fill [fill=black] (u2) circle (0.1) node [right] {$u_2$};   
\node (u3) at (0,-1) {};
\fill [fill=black] (u3) circle (0.1) node [right] {$u_3$};    
\node (u4) at (1.85,1.6) {};
\fill [fill=black] (u4) circle (0.1) node [right] {$u_4$};
    
\node (v1) at (-1,0) {};
\fill [fill=black] (v1) circle (0.1) node [right] {$a$}; 
\node (v2) at (2,-1) {};
\fill [fill=black] (v2) circle (0.1) node [left] {$c$}; 
\node (v3) at (-2,-1) {};
\fill [fill=black] (v3) circle (0.1) node [right] {$d$}; 
\node (v4) at (0,1.75) {};
\fill [fill=black] (v4) circle (0.1) node [right] {$b$}; 

\draw [line width=0.03cm] [rounded corners=0.7cm] (-0.05,1.5)--(2.1,-0.25)--(-1.6,-0.3)--cycle;
\draw [line width=0.03cm] [rounded corners=0.7cm] (1.1,0.6)--(2.55,-1.3)--(-0.6,-1.3)--cycle;
\draw [line width=0.03cm] [rounded corners=0.7cm] (-1.05,0.6)--(0.94,-1.4)--(-2.6,-1.3)--cycle;
\draw [line width=0.03cm] [rounded corners=0.5cm] (-0.4,2.15)--(0.6,2.15)--(0.7,-1.7)--(-0.4,-1.7)--cycle;
\end{tikzpicture}

\caption{The first four edges that burn when burning an STS($v$).}
\label{ex_3}
\end{figure}

In Theorem \ref{small_lem} we establish a lower bound on the burning number of an arbitrary STS.
Towards this goal, we define the recursive function $h(\round)$ by $h(1)=1$, and for each $\round \geq 2$,
\begin{equation}
\label{defn_of_h}
h(\round)=\binom{h(\round-1)}{2} -2h(\round-1)+3\round-2.
\end{equation}
The first few values of $h(\round)$ are given in Table~\ref{h_g_table}.
We now show that $h(\round)$ is an upper bound on the total number of vertices that could be burned by the end of round $\round$.

\begin{table}[ht]
\centering
\begin{tabular}{|c|c|}
\hline
$\round$ & $h(\round)$   \\ \hline 
1 & 1          \\ \hline  
2 & 2          \\ \hline  
3 & 4          \\ \hline  
4 & 8          \\ \hline  
5 & 25         \\ \hline  
6 & 266        \\ \hline  
7 & 34\,732      \\ \hline 
8 & 603\,069\,104  \\ \hline  
9 & 181\,846\,170\,592\,008\,673 \\ \hline 
\end{tabular}
\caption{The first nine values for the function $h$.}
\label{h_g_table}
\end{table}

\begin{lemma}
\label{big_lem}
Let $(V,\mathcal{B})$ be an STS$(v)$. For any burning sequence, in any round $\round \geq 1$, $f(\round) \leq h(\round)$. 
\end{lemma}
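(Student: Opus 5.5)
We would prove Lemma~\ref{big_lem} by induction on $\round$, tracking the number of newly burned vertices against the number of fully burned triples. For $\round\le 4$ the claim follows from Lemma~\ref{first4rounds}: if no redundant source is chosen we get exactly $f(\round)=h(\round)$. If a redundant source is chosen, $f$ can only be smaller, since a redundant source adds nothing beyond what propagation already adds. (For $v\le 7$ everything is bounded by $v\le 7<8$ from round $4$ on, and the earlier rounds agree with the same computation.) So assume $\round\ge 5$ and $f(\round-1)\le h(\round-1)$.

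The key counting step is as follows. A vertex $u$ burned by propagation in round $\round$ lies in a triple $\{u,x,y\}$ with $x,y\in F(\round-1)$. That triple is not fully burned at the end of round $\round-1$, and it is determined by the pair $\{x,y\}$. By linearity, distinct new vertices come from distinct pairs, and each pair of burned vertices lies in exactly one triple. Pairs inside a fully burned triple produce nothing, and each fully burned triple uses up $3$ pairs. Hence
\[
f(\round)\le f(\round-1)+\binom{f(\round-1)}{2}-3f'(\round-1)+1,
\]
where the $+1$ accounts for the source.

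It remains to bound $f'(\round-1)$ from below by something like $f(\round-1)-\round+\dots$. The natural observation is that every vertex burned by propagation by the end of round $\round-1$ completes a triple that becomes fully burned, and distinct such vertices give distinct triples. So the number of fully burned triples is at least the number of propagated vertices, namely $f(\round-1)-(\round-1)$ (at most $\round-1$ vertices are sources). This gives
\[
f(\round)\le \binom{f(\round-1)}{2}-2f(\round-1)+3(\round-1)+1=\binom{f(\round-1)}{2}-2f(\round-1)+3\round-2,
\]
which has exactly the form of~\eqref{defn_of_h}.

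Finally, to pass from $f(\round-1)\le h(\round-1)$ to $f(\round)\le h(\round)$, we use that $x\mapsto\binom{x}{2}-2x$ is nondecreasing for $x\ge 3$ (indeed for $x\ge 2.5$). If $f(\round-1)\ge 3$ we substitute directly. Small values of $f(\round-1)$ give trivially small $f(\round)$ that lie well below $h(\round)$ for $\round\ge 5$.

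The main obstacle is justifying the inequality $f'(\round-1)\ge f(\round-1)-(\round-1)$ carefully. A source may be redundant, and a vertex may be completed by several triples simultaneously. For that reason we will count, for each non-source burned vertex, one chosen triple that burned it, and check that this assignment is injective. It is injective because that vertex is the last of the triple's points to burn and is burned by propagation, so the triple determines the vertex. One further subtlety is that redundant sources should be counted as sources rather than as propagated vertices, which only weakens the bound in the safe direction.
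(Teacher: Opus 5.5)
Your proposal is correct and follows the paper's proof essentially step for step. It uses the same injective assignment of propagated vertices to fully burned triples to get $f'(\rho)\ge f(\rho)-\rho$, the same pair-counting bound $f(\rho+1)\le f(\rho)+\binom{f(\rho)}{2}-3f'(\rho)+1$, and the same substitution into the recurrence for $h$; the only differences are that you take the base cases $\rho\le 4$ from Lemma~\ref{first4rounds} instead of starting at $\rho=1$, and that you state explicitly the monotonicity of $x\mapsto\binom{x}{2}-2x$ (which fails for very small $x$), a point the paper's final inequality passes over silently.
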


\begin{proof}
Consider round $\round$ of the burning process on an arbitrary STS$(v)$, $(V,\mathcal{B})$.
Observe that since exactly one point is chosen to be burned per round, exactly $\round$ vertices have been chosen from the burning sequence. Thus, by round $\round$, there must be at least $f(\round) - \round$ points burned through propagation, that were not part of the burning sequence.  
To each point $x$ that was burned through propagation, we may associate a triple of the form $\{x,y,z\}$ such that $y$ and $z$ are points that were burned in rounds strictly before $x$ was burned. 
Note that any such triple can be associated with at most one point, namely the point burned last of those in the triple. 
There is exactly one associated triple for each point burned through propagation. Each of these triples is burned, and so the number of burned triples is greater than the number of points burned through propagation, that is
$f^\prime(\round)\geq f(\round) - \round$. 

We now proceed by induction on $\round$.
In round 1, $f(1)=h(1)=1$.
Now suppose we have completed round $\round$, and consider round $\round+1$.  As the inductive hypothesis, we assume $f(\round) \leq h(\round)$. 
In round $\round+1$, there will be $f(\round)$ points that were burned in previous rounds. 
For an unburned points $x$ to be burned through propagation in round $\round+1$, there must be two previously burned points, say $y$ and $z$, with $\{x,y,z\} \in \mathcal{B}$. 
There are $\binom{f(\round)}{2}$ pairs of burned points at the start of round $\round+1$. 
Of these pairs of burned points, exactly $3f^\prime(\round)$ occurred in triples with all three points already burned. 
This means at most $\binom{f(\round)}{2}-3f^\prime(\round)$ points will be burned due to propagation in round $\round+1$. 
An additional point is chosen to be burned as a source, so we have $f(\round+1) \leq f(\round) + \binom{f(\round)}{2}-3f^\prime(\round)+1$. 

We therefore find that 
\begin{align*}
    f(\round+1) &\leq f(\round) + \binom{f(\round)}{2}-3(f(\round) - \round) +1\\
            &= \binom{f(\round)}{2}-2f(\round) +3\round+1\\
            & \leq \binom{h(\round)}{2}-2h(\round) +3\round+1 \\
            & = h(\round+1),
\end{align*}
where the last inequality holds as 
\[
\binom{f(\round)}{2}-2f(\round) = f(\round)\left( \frac{f(\round)-1}{2} -2 \right) \leq h(\round)\left( \frac{h(\round)-1}{2} -2 \right). \qedhere
\]
\end{proof}

We now give a lower bound on the burning number of an arbitrary STS$(v)$.

\begin{theorem}
\label{small_lem}
Let $H$ be a Steiner triple system on $v$ vertices. 
If $\round_v$ is the smallest natural number satisfying $v\leq h(\round_v)$, then $\round_v \leq b(H)$. 
\end{theorem}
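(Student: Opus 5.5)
The plan is to derive the bound directly from Lemma~\ref{big_lem}, arguing by contradiction. Set $b = b(H)$ and fix a burning sequence $(u_1,\ldots,u_b)$ of minimum length, so that $H$ is completely burned at the end of round $b$. Then $f(b) = v$, and Lemma~\ref{big_lem} gives $v = f(b) \le h(b)$. So $b$ is a natural number satisfying $v \le h(b)$. Since $\round_v$ is by definition the smallest such natural number, we get $\round_v \le b = b(H)$.

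There are two minor points to check so the argument is airtight.

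\begin{itemize}
\item Lemma~\ref{big_lem} is stated for any burning sequence and any round $\round \ge 1$. A minimum burning sequence may use a redundant source in its final round, but the lemma does not exclude redundant sources, so it applies.
\item The quantity $\round_v$ must be well defined, i.e.\ $h$ must eventually be at least $v$. This follows from the table (or a quick induction) showing $h$ is increasing from $\round=3$ onward and unbounded. Strictly, though, well-definedness is not needed for the inequality: $b(H)$ itself is a witness with $v \le h(b(H))$, so the set of $\round$ with $v \le h(\round)$ is nonempty and its minimum exists.
\end{itemize}

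I expect no real obstacle. The only subtlety is that $h$ need not be monotone everywhere. We never use monotonicity, only minimality of $\round_v$ within the set of witnesses, and $b(H)$ lies in that set.
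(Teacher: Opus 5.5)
Your proof is correct and takes the same route as the paper: apply Lemma~\ref{big_lem} at the last round of a minimum burning sequence, then use the minimality of $\rho_v$. The paper argues contrapositively (a sequence of length $\rho_v-1$ burns at most $h(\rho_v-1)<v$ vertices), while you argue directly that $b(H)$ itself satisfies $v\le h(b(H))$, which also covers shorter sequences without needing $h$ to be monotone.
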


\begin{proof}
By Lemma~\ref{big_lem}, in any STS($v$), no more than $h(\round)$ vertices can possibly be on fire at the end of round $\round$. Since $h(\round_v -1)<v$, a burning sequence of length $\round_v -1$ cannot possibly burn all $v$ vertices of the STS. Thus, a sequence of length $\round_v$ or greater is required, so $\round_v\leq b(H)$.
\end{proof}

We now use this result to derive an explicit bound on the burning number of an STS$(v)$ by confining the logarithm of the function $h$ within a narrow interval.  

\begin{theorem}
If $\round \geq 9$, $\alpha = 1.07925087759784$, and 
$\beta = 1.07925087759785 = \alpha+10^{-14}$, then
    $$2 \alpha^{2^{\round}}+\frac{5}{2} \leq h(\round) \leq 2\beta^{2^{\round}}.$$
\label{Size of h(r)}
\end{theorem}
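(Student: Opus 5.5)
The plan is to linearize the recursion by completing the square and then to study the $2^{\round}$-th root of the normalized sequence. Rewrite \eqref{defn_of_h} as $h(\round+1)=\tfrac12 h(\round)^2-\tfrac52 h(\round)+3\round+1$. Put $k(\round)=h(\round)-\tfrac52$ and then $m(\round)=k(\round)/2$, so that $h(\round)=2m(\round)+\tfrac52$. A short computation turns the recursion into
\[
m(\round+1)=m(\round)^2+c_\round, \qquad c_\round=\tfrac12\left(3\round-\tfrac{37}{8}\right),
\]
where $c_\round>0$ for $\round\ge 2$. In this notation the statement is equivalent to
\[
\alpha^{2^\round}\le m(\round)\le \beta^{2^\round}-\tfrac54 \qquad\text{for all } \round\ge 9 .
\]

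Set $a_\round=m(\round)^{2^{-\round}}$; by Table~\ref{h_g_table}, $m(\round)>1$ for $\round\ge 5$. For the lower bound, observe that $m(\round+1)\ge m(\round)^2$, so $a_{\round+1}\ge a_\round$ and the sequence $(a_\round)$ is nondecreasing. It therefore suffices to check $\alpha\le a_9$, that is,
\[
2^{9}\ln\alpha\le \ln\!\left(\frac{h(9)-5/2}{2}\right),
\]
using the exact value of $h(9)$ from Table~\ref{h_g_table}. This is a finite numerical verification.

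For the upper bound, write $m(\round+1)=m(\round)^2(1+\varepsilon_\round)$ with $\varepsilon_\round=c_\round/m(\round)^2$. Then
\[
\ln a_{\round+1}-\ln a_\round=2^{-(\round+1)}\ln(1+\varepsilon_\round)\le 2^{-(\round+1)}\varepsilon_\round .
\]
Since $m(\round)\ge\alpha^{2^\round}$ grows doubly exponentially, the sum $\sum_{j\ge 9}2^{-(j+1)}\varepsilon_j$ is dominated by its first term. That term is of order $2^{-10}\cdot 12/(10^{20})^2$, which is far below $10^{-14}$. Hence $\gamma:=\lim a_\round\le a_9\,e^{\delta}$ for an explicit tiny $\delta$.

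The next step is to confirm that $\gamma<\beta$ with room to spare. Given that, for every $\round\ge 9$ we have $m(\round)\le\gamma^{2^\round}$, and the gap $\beta^{2^\round}-\gamma^{2^\round}$ is astronomically larger than $\tfrac54$. For instance,
\[
\beta^{2^\round}-\gamma^{2^\round}\ge \gamma^{2^\round}\left((\beta/\gamma)^{2^9}-1\right),
\]
and $\gamma^{2^9}\approx 10^{17}$ by the table, so this gap is huge. This yields $h(\round)=2m(\round)+\tfrac52\le 2\beta^{2^\round}$.

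The main obstacle is the numerical step: pinning $a_9=\bigl((h(9)-5/2)/2\bigr)^{1/512}$ down to about 14 significant digits and checking that it lies in $[\alpha,\beta)$ with enough margin to absorb $\delta$ and the $\tfrac54$ correction. I would handle this with careful interval arithmetic on $\ln$ of the 18-digit integer $h(9)$. If the margin turns out to be too thin at $\round=9$, the fallback is to start the comparison at $\round=10$ or $11$. In that case one computes $h(10)$ exactly, where the correction $\varepsilon_\round$ is even smaller, and then treats $\round=9$ separately by direct comparison.
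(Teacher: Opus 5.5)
Your proposal is correct, though you reach the upper bound by a different route from the paper.

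\textbf{Lower bound: same as the paper.} After your normalization $m(\rho)=(h(\rho)-\tfrac52)/2$, the monotonicity of $a_\rho=m(\rho)^{2^{-\rho}}$ is exactly the paper's lower-bound induction. The paper's step $h(\rho)\ge 2\alpha^{2^\rho}-\tfrac{25}{8}+3\rho-2\ge 2\alpha^{2^\rho}+\tfrac52$ is precisely the inequality $c_{\rho-1}\ge 0$. The only difference is the base point: the paper anchors at $\rho=8$, you anchor at $\rho=9$, and both work.

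\textbf{Upper bound: a different route.} The paper uses a direct induction. From $h(\rho-1)\le 2\beta^{2^{\rho-1}}$ the recursion gives $h(\rho)\le 2\beta^{2^\rho}-5\beta^{2^{\rho-1}}+3\rho-2\le 2\beta^{2^\rho}$. The only computation needed is the base case $h(9)\le 2\beta^{512}$.

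In your notation this induction is one line. If $m(\rho)\le B-\tfrac54$ with $B=\beta^{2^\rho}$, then $m(\rho+1)\le B^2-\tfrac52 B+\tfrac{25}{16}+c_\rho\le B^2-\tfrac54$, because $\tfrac52B$ dwarfs $\tfrac{45}{16}+c_\rho$. So the limit $\gamma$ and the tail estimate are not needed.

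What your route buys is the sharper sandwich $a_9^{2^\rho}\le m(\rho)\le\gamma^{2^\rho}$ for $\rho\ge 9$, with $\gamma/a_9-1$ of order $10^{-36}$. This makes explicit that both halves of the theorem reduce to the single numerical fact $\alpha\le a_9<\beta$, and it explains where $\alpha$ and $\beta$ come from. The paper's route is shorter and avoids limits.

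\textbf{Numerical remarks.} One has $a_9\approx 1.0792508775978419$. So $a_9-\alpha\approx 1.9\times10^{-15}$ and $\beta-a_9\approx 8.1\times10^{-15}$, and both checks succeed.
\begin{itemize}
\item The margin on the $\alpha$ side is thin. You need $a_9$ to about 16 significant digits, or $\ln m(9)$ to about 14, rather than $a_9$ to 14.
\item Your fallback of restarting at $\rho=10$ or $11$ could never help. The sequence $a_\rho$ changes by less than $10^{-18}$ after $\rho=8$, so a failed check at $\rho=9$ would fail at every later $\rho$. Fortunately the fallback is not needed.
\item You have $m(9)\approx 9.1\times10^{16}$, not $10^{20}$. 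Hence $\varepsilon_9\approx1.4\times10^{-33}$ rather than order $10^{-39}$. This slip is harmless to the argument.
\end{itemize}
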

\begin{proof}
We proceed by induction on $\round$.  
For the base case for the upper bound, consider $\round=9$.  It is straightforward to verify that $\lfloor 2 \beta^{2^\round}\rfloor =181846170592709201$, which exceeds $h(9)$; see Table~\ref{h_g_table}. 

Now, recalling that $h(\round)=\binom{h(\round-1)}{2}-2h(\round-1)+3\round-2$, we have the inductive step, as 
\begin{align*}
h(\round) &= \binom{h(\round-1)}{2} -2h(\round-1)+3\round-2 \\
     &= h(\round-1)\left( \frac{h(\round-1)-1}{2} - 2 \right) +3\round-2 \\
     &\leq 2\beta^{2^{\round-1}} \left( \beta^{2^{\round-1}} - \frac{5}{2}\right)+3\round-2\\
     &= 2\beta^{2^{\round-1}\cdot 2} - 5 \beta^{2^{\round-1}} +3\round-2\\
     & \leq 2\beta^{2^{\round-1}\cdot 2}\\
     & = 2\beta^{2^{\round}},
\end{align*}
where the last inequality holds since $5 \beta^{2^{\round-1}} \geq 3\round-2$ for $\round \geq 5$. This completes the inductive step, and the proof for the upper bound. 

For the lower bound, we consider as the base case $\round=8$. It follows from calculation that $\lceil 2 \alpha^{2^\round}+5/2\rceil = 603069104$. 
This is equal to $h(8)$; see Table~\ref{h_g_table}.

For the inductive step, we have 
\begin{align*}
h(\round) &=\binom{h(\round-1)}{2} -2h(\round-1)+3\round-2 \\
     &= h(\round-1)\left( \frac{h(\round-1)-1}{2} - 2 \right) +3\round-2 \\
     &\geq \left( 2 \alpha^{2^{\round-1}} +\frac{5}{2} \right)\left( \alpha^{2^{\round-1}}-\frac{5}{4}\right)+3\round-2\\
     &= 2 \alpha^{2^{\round-1}\cdot 2} - \frac{25}{8} +3\round-2\\
     & \geq 2 \alpha^{2^{\round}}- \frac{5}{2}+3\round-3+\frac{3}{8}\\
     & \geq 2 \alpha^{2^{\round}}-\frac{5}{2},
\end{align*}
where we have $3\round\geq 3-\frac{3}{8}$ for $\round \geq 1$. 
This completes the inductive step, and the proof for the lower bound.
\end{proof}

\begin{corollary}\label{cor:lowerbound_burning_logs}
    If $H$ is an STS$(v)$ and $\beta = 1.07925087759785$, then $b(H) > \lfloor \log_2(\log_\beta(v/2))\rfloor$.      
\end{corollary}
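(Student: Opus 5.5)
Set $k = \lfloor \log_2(\log_\beta(v/2))\rfloor$; the goal is $b(H) > k$. The plan is to combine Theorem~\ref{small_lem} with the upper bound of Theorem~\ref{Size of h(r)}. Since $2^k \le \log_\beta(v/2)$, we have $2\beta^{2^k} \le v$. By Theorem~\ref{small_lem}, $b(H) \ge \round_v$, where $\round_v$ is the least integer with $v \le h(\round_v)$. So it suffices to show $h(k) < v$. Since $h$ is increasing, this forces $\round_v > k$ and hence $b(H) > k$.

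For $k \ge 9$ the upper bound of Theorem~\ref{Size of h(r)} gives $h(k) \le 2\beta^{2^k} \le v$. This is only a non-strict inequality, so the boundary case needs attention. Equality $v = 2\beta^{2^k}$ would make $v$ an irrational power-type quantity. Concretely, $\beta^{2^k}$ is not the integer $v/2$, because $\beta$ is a terminating decimal that is not an integer, so every power of it is a non-integer rational. Alternatively, the inductive step in the proof of Theorem~\ref{Size of h(r)} actually discards the positive quantity $5\beta^{2^{\round-1}} - (3\round-2)$, so it yields $h(k) < 2\beta^{2^k}$ strictly. Either argument gives $h(k) < v$.

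For $k \le 8$ the bound is false as stated for $h$, but the argument can instead use the table of values. We need $h(k) < v$ whenever $2\beta^{2^k} \le v$. For each $k \le 8$ I would check numerically that $2\beta^{2^k} > h(k)$, or at least that $v \ge 2\beta^{2^k}$ implies $v > h(k)$. Since $h(\round) \le 2\beta^{2^\round}$ plausibly also holds for small $\round$ (for example $2\beta^{256} \approx 6.03\times 10^8 \ge h(8)$), this is a finite verification.

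One must also ensure that $h$ is increasing, which is immediate from the recursion once $h \ge 8$, and check the early values directly. Finally, for tiny $v$ (say $v < 2\beta$), $k$ is nonpositive or undefined, and the claim is trivial since $b(H) \ge 1$. The main obstacle I expect is the small cases $k \le 8$ together with the strictness at equality. I would handle them by a direct table comparison and the strict-inequality remark above.
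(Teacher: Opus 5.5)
Your argument for $k\ge 9$ is sound and is essentially the paper's. The paper supposes $b(H)\le k$ and applies Theorem~\ref{Size of h(r)} at $\round=b(H)$; you apply it at $\round=k$ and use that $h$ is increasing. Your option (b) for strictness works, since both the base case $\round=9$ and the discarded term $5\beta^{2^{\round-1}}-(3\round-2)$ are strict. Option (a) is misstated. Since $v$ is odd, $v/2$ is not an integer, so $\beta^{2^k}\notin\mathbb{Z}$ does not rule out $2\beta^{2^k}=v$. What you need is $2\beta^{2^k}\notin\mathbb{Z}$. This holds because $\beta=N/10^{14}$ with $N$ odd, so the reduced denominator of $\beta^{2^k}$ is divisible by $2^{14\cdot 2^k}$.

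The genuine gap is the range $k\le 8$, and it cannot be closed because the statement is false there. The inequality $h(\round)\le 2\beta^{2^\round}$ fails for every $3\le\round\le 8$. For example, $2\beta^{16}\approx 6.78<8=h(4)$. At $\round=8$, the paper's own lower-bound base case $\lceil 2\alpha^{256}+5/2\rceil=h(8)$ forces $2\alpha^{256}\le 603069101.5$, hence $2\beta^{256}<603069102<h(8)$, contrary to your estimate. This is exactly why Theorem~\ref{Size of h(r)} is only stated for $\round\ge 9$.

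Your fallback, that $v\ge 2\beta^{2^k}$ forces $v>h(k)$, also fails. For $v=7$ we have $2\beta^{16}\le 7<2\beta^{32}$, so $k=4$, but $7\le h(4)$. Worse, the conclusion itself fails: the STS$(7)$ is $\PG(2,2)$, and $b(\PG(2,2))=4=k$ by Theorem~\ref{cor:proj_regular_burning_solved}. The paper's proof has the same hole, since it applies Theorem~\ref{Size of h(r)} at $\round=b(H)$ without checking $b(H)\ge 9$.

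What your argument does prove is the corollary with an added hypothesis such as $v>h(8)$. Then Theorem~\ref{small_lem} gives $b(H)\ge 9>k$ whenever $k\le 8$, and your $k\ge 9$ argument covers the rest. More precisely, the route through Theorem~\ref{small_lem} works for every admissible $v$ except those with $2\beta^{2^k}\le v\le h(k)$, namely $v\in\{7,25,265,34731,603069103\}$. For the four orders other than $7$, settling the inequality would require knowing whether some STS$(v)$ actually burns within $k$ rounds.
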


\begin{proof}
Assume by way of contradiction that $b(H) \leq \lfloor \log_2(\log_\beta(v/2))\rfloor$. Then, because $v$ is necessarily odd, $\log_2(\log_\beta(v/2)) \notin \mathbb{N}$. 
We therefore have $b(H) <  \log_2(\log_\beta(v/2))$. Using elementary arithmetic, we equivalently have $2\beta^{2^{b(H)}}<v$. Finally, by applying Theorem~\ref{Size of h(r)} with $\rho=b(H)$, we get the series of inequalities $$h(b(H))\leq 2\beta^{2^{b(H)}}<v, $$ which is a contradiction since, by the definition of $h$, $h(b(H))\geq v$ must be true.
\end{proof}

Corollary~\ref{cor:lowerbound_burning_logs} implies that the burning number of a Steiner triple system becomes arbitrarily large, and hence
for any fixed $b\in\mathbb{N}$, there can only be finitely many STSs with burning number $b$. In fact, as we will see from Theorem~\ref{thm:AllBurningNumbers}, 
each integer $b \geq 3$ is attained as the burning number of some Steiner triple system.  
To prove this result, we will need the following result of Horsley regarding embedding Steiner triple systems.

\begin{theorem}\cite{MR3248026} \label{thm:partialSTS_smaller}
    Any partial Steiner triple system of order $u \geq 62$ with at most $\frac{u^2}{50} - \frac{11u}{100} - \frac{116}{75}$ triples can be embedded into some STS$(v)$ for each integer $v \geq  \frac{8u+17}{5}$ such that $v \equiv 1, 3 \pmod 6$.
\end{theorem}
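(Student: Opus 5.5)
Since this theorem is quoted from~\cite{MR3248026} and not proved in the paper, what follows is my reconstruction of how such an embedding would be obtained, not the paper's argument. The plan is a direct construction. Let $P=(U,\mathcal{A})$ be the partial STS with $|U|=u$ and $t=|\mathcal{A}|$ small. Let $L$ be its \emph{leave}, the graph on $U$ whose edges are the pairs not covered by $\mathcal{A}$. Adjoin a set $W$ of $w=v-u$ new points. The goal is to decompose into triangles the graph $K_v$ with the edges of the triples of $\mathcal{A}$ removed. Since $v\equiv 1,3\pmod 6$, the usual divisibility conditions for this graph hold automatically: even degrees and edge count divisible by $3$.

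The construction uses three kinds of new triples.
\begin{itemize}
\item \emph{Mixed triples} $\{a,b,x\}$ with $ab\in E(L)$ and $x\in W$. For each $x\in W$, the pairs $ab$ used with $x$ must form a matching $M_x$ in $L$.
\item \emph{Cross triples} $\{a,x,y\}$ with $a\in U$ and $x,y\in W$.
\item \emph{Internal triples} lying entirely inside $W$.
\end{itemize}

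\textbf{First step: cover $L$ by matchings.} Because $t\le u^2/50-O(u)$, the leave $L$ is very dense: every vertex has degree $u-1-2r(a)$, where $r(a)$ is its replication in $\mathcal{A}$, and only $O(u^2/50)$ edges are missing. By Vizing's theorem, $E(L)$ partitions into at most $\Delta(L)+1\le u$ matchings. Assign them to distinct points of $W$, which is possible once $w\ge u$; points of $W$ left over receive empty matchings. Each point $a\in U$ then still has to be joined to $w-\deg_L(a)$ points of $W$ through cross triples. This requires, for each $a$, the graph $G_a$ of remaining $a$--$W$ pairs to be paired up by a perfect matching of edges inside $W$. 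Parity works out: $w-\deg_L(a)=w-u+1+2r(a)$, and this is even because $v$ is odd.

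\textbf{Second step: the cross triples and the final decomposition.} For the cross triples, choose for each $a\in U$ a perfect matching $N_a$ on the set $W_a$ of points of $W$ not used by $a$ in mixed triples. The matchings $N_a$ ($a\in U$) must be pairwise edge-disjoint. After that, the graph $K_W-\bigcup_a N_a$ must decompose into triangles. I would pick the $N_a$ from a structured family, for instance the near-one-factors of a cyclic or $1$-rotational near-one-factorization of $K_w$, so that what remains is a complete graph minus a union of one-factors. Results of Colbourn--Rosa type, or Bryant--Horsley results on triangle decompositions of $K_w$ minus a $2$-factor or a union of $1$-factors, would then finish the decomposition.

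\textbf{Main obstacle.} The hardest part is this last compatibility: making the sets $W_a$, which are determined by the Vizing colouring, line up with a removable, decomposable system of matchings on $W$ while using only about $\tfrac35u$ extra points. The threshold is $v\ge (8u+17)/5$, i.e.\ $w\gtrsim 3u/5$, which is smaller than the $w\ge u$ that the naive assignment of one matching per point of $W$ in the first step needs. So the construction must balance two things: some $L$-edges must be absorbed into triples inside $U$, i.e.\ triangles of the dense graph $L$, and packing $L$ into few matchings of controlled size. I would first try to extract a large triangle packing of $L$ so that the residual leave has maximum degree about $3u/5$. Then Vizing's theorem applies with $w\approx 3u/5$ colours. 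I expect the delicate counting behind the constant $8/5$ and the bound on $t$ to come from making this residual-degree estimate work uniformly, including at vertices $a$ with large $r(a)$.
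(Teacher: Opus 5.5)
The paper gives no proof of this statement; it is quoted from Horsley. Your argument therefore has to stand on its own, and it does not. You set up the standard framework (leave $L$, mixed, cross and internal triples), but the steps that carry the content of the theorem are only announced. The one concrete plan you give (Vizing with at most $u$ colours, one colour per point of $W$, empty matchings elsewhere) needs $w\ge u$. That is essentially the classical regime $v\ge 2u+1$, not $v\ge(8u+17)/5$. When $w$ is as small as $(3u+17)/5$, two constraints are forced that you never establish.

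First, every $a\in U$ needs $\deg_{L'}(a)\le w$ in the final leave $L'$.
\begin{itemize}
\item Since $t\le u^2/50$, the average replication is at most $3u/50$, so $L$ has average degree at least $0.88u-1$.
\item A point with $r(a)=0$ has degree $u-1$ in $L$.
\end{itemize}
So you need a packing of $\Omega(u^2)$ edge-disjoint triangles of $L$ with degree control at every vertex. ``I would first try'' is not a proof that such a packing exists.

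Second, a point $x\in W$ has only $w-1$ neighbours in $W$ but must share a triple with each of the $u$ points of $U$. Hence $u-2|M_x|\le w-1$, so every colour class must be a matching of size at least $(u-w+1)/2\approx u/5$. Vizing's theorem gives no lower bound on class sizes, and empty classes are impossible in this regime.

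The second step also fails as proposed, for three reasons.
\begin{itemize}
\item Each $N_a$ must be a perfect matching on the prescribed set $W_a$, where $|W_a|=w-\deg_{L'}(a)$. Because the colour classes are large, $\deg_{L'}(a)$ averages at least $w(u-w+1)/u\approx 6u/25$, so typically $|W_a|\approx 9u/25$. A near-one-factor of $K_w$ spans $w-1\approx 3u/5$ points, so the $N_a$ cannot be near-one-factors.
\item There are $u>w$ matchings $N_a$ but only $w$ near-one-factors in a near-one-factorization of $K_w$.
\item $\bigcup_a N_a$ has degree $u-2|M_x|$ at $x$, which varies with $x$ and can equal $w-1$. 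So $K_W-\bigcup_a N_a$ is in general neither $K_w$ minus a union of one-factors nor $K_w$ minus a $2$-factor, and the decomposition results you cite do not apply.
\end{itemize}

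The whole difficulty is to construct four things \emph{simultaneously}: the triangle packing inside $U$, the colouring of $L'$, pairwise edge-disjoint matchings $N_a$ on the sets $W_a$, and a triangle decomposition of the remainder of $K_W$. This is also where the constants $8/5$, $1/50$ and $62$ have to come from. You correctly flag it as the main obstacle but give no argument for it, so the proposal does not prove the statement.
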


\begin{theorem}\label{thm:AllBurningNumbers}
For any integer 
$\round \geq 3$, 
there exists an STS$(v)$,
$H$, satisfying $h(\round-1)+1 \leq v \leq 2h(\round-1)-3$ such that  $b(H)=\round$.   
\end{theorem}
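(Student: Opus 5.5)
The plan is to prove the two inequalities $b(H)\ge \round$ and $b(H)\le\round$ separately, with the order $v$ chosen inside the stated window. The lower bound is immediate. Since $v\ge h(\round-1)+1>h(\round-1)$, the smallest $\round_v$ with $v\le h(\round_v)$ is at least $\round$, so Theorem~\ref{small_lem} gives $b(H)\ge\round$. All the work is in the upper bound, and the idea is to realize the extremal growth of Lemma~\ref{big_lem} exactly.

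\emph{Step 1: an extremal partial system.} Let $u=h(\round-1)$. We build a partial STS $P$ on $u$ points, together with a sequence of $\round-1$ sources, such that every point of $P$ is burned by the end of round $\round-1$. The construction is recursive, starting from a single source. At each round $t$:
\begin{itemize}
\item for every pair of currently burned points not yet in a fully burned triple, add a brand new point $x$ and the triple consisting of that pair and $x$;
\item add one further new isolated point as the source $u_{t+1}$.
\end{itemize}
All new points are distinct and each new triple uses a previously uncovered pair, so linearity is preserved. Write $f=f(t)$ and $f'=f'(t)$. The bookkeeping gives
\[
f(t+1)=f+\tbinom{f}{2}-3f'+1, \qquad f'(t+1)=f'+\tbinom{f}{2}-3f'.
\]
Hence $f'(t+1)-f(t+1)=f'-f-1$, and the invariant $f'(t)=f(t)-t$ is maintained from $t=3$ on (Lemma~\ref{first4rounds}). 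The recursion then coincides with \eqref{defn_of_h}, so $|P|=h(\round-1)=u$. The number of triples is $u-\round+1$, which is far below the bound $u^2/50-11u/100-116/75$.

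\emph{Step 2: embedding.} Assume first that $u\ge 62$, i.e.\ $\round\ge 7$. By Theorem~\ref{thm:partialSTS_smaller}, $P$ embeds in an STS$(v)$ $H$ for every admissible $v\ge (8u+17)/5$. The interval $[(8u+17)/5,\,2u-3]$ has length about $2u/5$, far more than $6$, so it contains an admissible $v$, and this $v$ also exceeds $u+1$. Extra triples of $H$ inside $P$ can only make points burn earlier, so applying the same $\round-1$ sources to $H$ (replacing any source that is already burned by an arbitrary valid point, which does no harm) burns all of $P$ by round $\round-1$.

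\emph{Step 3: the outside points burn in round $\round$.} Take any $y\notin P$. Its $u$ pairs $\{y,p\}$ with $p\in P$ lie in $u$ distinct triples $\{y,p,r\}$. If every such $r$ were outside $P$, the map $p\mapsto r$ would be injective into $V\setminus(P\cup\{y\})$, forcing $v-u-1\ge u$, i.e.\ $v\ge 2u+1$. That contradicts $v\le 2u-3$. So every outside point lies on a triple with two points of $P$ and burns by propagation in round $\round$, with the $\round$-th source arbitrary. Therefore $b(H)\le\round$.

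The main obstacle is the small cases $3\le\round\le 6$, where $u<62$ and Horsley's theorem does not apply. For these I would exhibit explicit systems by hand or by computer, for instance the Fano plane and STS$(9)$ for $\round=3,4$, and cyclic STSs of orders in $[9,13]$, $[26,47]$ and $[267,529]$ for $\round=4,5,6$ respectively. For $\round=3$ the stated window is degenerate ($h(2)+1=3>1=2h(2)-3$), so the bounds on $v$ must be read loosely there, and one exhibits directly a system with $b(H)=3$, such as STS$(7)$.
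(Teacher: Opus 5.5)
Your argument for $\round\ge 7$ is the paper's proof. It uses the same recursive extremal partial system with $h(\round-1)$ points and $h(\round-1)-\round+1$ triples, Horsley's embedding into an admissible order $v\le 2h(\round-1)-3$, the same injectivity count forcing every outside point onto a triple with two points of the partial system, and the lower bound from Lemma~\ref{big_lem}.

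The gap is in the small cases $3\le\round\le 6$, where every example you name has the wrong burning number.
\begin{itemize}
\item The Fano plane has $b=4$, not $3$, and STS$(9)$ has $b=5$, not $4$. By Lemma~\ref{big_lem}, at most $h(3)=4<7$ (resp.\ $h(4)=8<9$) points can be burning after round $3$ (resp.\ $4$).
\item Your order windows are shifted by one. The intervals $[9,13]$, $[26,47]$ and $[267,529]$ are the windows $[h(\round-1)+1,\,2h(\round-1)-3]$ for $\round=5,6,7$, not for $4,5,6$. For instance, any STS of order at least $267>h(6)$ has burning number at least $7$, so it cannot serve for $\round=6$.
\item The window contains no admissible order for $\round=4$ either (it is $\{5\}$), so the order constraint must be relaxed there too, not only at $\round=3$.
\end{itemize}

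The correct choices, which are the ones the paper uses, are STS$(3)$, STS$(7)$, STS$(9)$ and $\PG(4,2)$ of order $31$ for $\round=3,4,5,6$. Only the last two lie in the stated windows, as forced by the previous remark. No computer is needed to confirm these values. Lemma~\ref{big_lem} gives $b(H)\ge\round$ because $v>h(\round-1)$. Theorem~\ref{upper_bound_sts}, whose proof does not depend on this result, gives $b(H)\le\lceil\log_2 v\rceil+1$, which equals $3,4,5,6$ for $v=3,7,9,31$ respectively.
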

\begin{proof}
We have computationally verified that the Steiner triple systems of orders 3, 7 and 9 have burning numbers 3, 4 and 5, respectively, while the projective Steiner triple system on 31 points has burning number 6; this latter result will also be proven theoretically in Corollary~\ref{cor:proj_regular_burning_solved}.

Fix $\round\geq 7$. We will construct a partial Steiner triple system $H_{\round-1}=(V_{\round-1},\mathcal{B}_{\round-1})$.
To do this, we recursively construct partial systems $H_i = (V_i, \mathcal{B}_i)$ for $0\leq i< \round$. Set  $V_0=\mathcal{B}_0 = \emptyset$, and for $i\geq 0$, we define $(V_{i+1}, \mathcal{B}_{i+1})$ by the following process.
For each pair of points $x,y \in V_i$ that do not occur together in a triple of $\mathcal{B}_i$, we create a new point $z_{xy}$ that is added to $V_{i+1}$ and we add $\{x,y,z_{xy}\}$ to $\mathcal{B}_{i+1}$. 
We then add an additional new point $z_{i+1}$ to $V_{i+1}$. 

Note that since $\mathcal{B}_i$ contains $3|\mathcal{B}_i|$ pairs of points from $V_i$, there are $\binom{|V_i|}{2} -3|\mathcal{B}_i|$ of pairs of points \emph{not} in any block of $H_i$. Each of these pairs generates a new point on the next step, so
\begin{equation}
\label{V_recursion}
|V_{i+1}| = |V_i| + \binom{|V_i|}{2} -3|\mathcal{B}_i| + 1
\end{equation}
and
\begin{equation*}
\label{B_recursion}
|\mathcal{B}_{i+1}| = |\mathcal{B}_i| + \binom{|V_i|}{2} - 3|\mathcal{B}_i|.
\end{equation*}
For each $i \geq 0$, we consider the differences $\Delta_{i} =  |V_{i}|-|\mathcal{B}_{i}|$. Observe that
$$\Delta_{i+1} = |V_{i+1}|-|\mathcal{B}_{i+1}| = |V_i|-|\mathcal{B}_i| + 1 = \Delta_i + 1.$$
Since $\Delta_0=0$, it follows that $\Delta_i=i$, thus, $|V_{i}|-|\mathcal{B}_{i}| = i$. Substituting this into Equation~(\ref{V_recursion}), we obtain
\[
|V_{i+1}| = |V_i| +\binom{|V_i|}{2} -3(|V_i|-i) + 1
= \binom{|V_i|}{2} -2|V_i| +3(i+1) -2 = h(i+1).
\]

Observe that since $\round\geq 7$,
$|V_{\round-1}| = h(\round-1) \geq h(7-1)=266 \geq 62$.
Now, since $u \leq \frac{1}{50}u^2 - \frac{11}{100}u - \frac{116}{75}$ holds whenever $u\geq 57$, we have that
\[
|\mathcal{B}_{\round-1}| = |V_{\round-1}|-(\round-1) \leq \frac{1}{50}|V_{\round-1}|^2 - \frac{11}{100}|V_{\round-1}| - \frac{116}{75}. 
\]
Hence, by Theorem~\ref{thm:partialSTS_smaller}, $H_{\round-1}$ can be embedded in a Steiner triple system, $H=(V,\mathcal{B})$, whose order $v$ is at most $\frac{8|V_{\round-1}|+17}{5} + 4 < 2(|V_{\round-1}|-1)$, where the inequality holds since $|V_{\round-1}| \geq 24$. 

Note that the Steiner triple system $H$ has order strictly greater than $h(\round-1)$, and so will take at least $\round$ rounds for all vertices to be burned, due to Lemma~\ref{big_lem}. 
As such, we have $b(H) \geq \round$. 

We now perform the burning process on $H$ with the first $\rho-1$ sources being $(z_1 ,\ldots, z_{\round-1})$. We clearly have that the vertices in $V_{\round-1}$ will be burned by round $\round-1$. 

Recall that $V$ is the point set of the Steiner triple system $H$, and $v=|V| < 2(|V_{\round-1}|-1)$. We partition $V$ into the three point sets; $V_{\round-1}$ (which contains $z_{\rho-1}$), $X\subseteq V\setminus V_{\round-1}$, and $Y\subseteq V\setminus V_{\rho-1}$, such that $X$ and $Y$ partition $V\setminus V_{\rho-1}$, where $x \in X$ if and only if there are $a,b\in V_{\round-1}$ with  $\{a,b,x\}$ a triple of $H$. That is, the elements of $X$ are not burning by the end of round $\round-1$, but will catch fire through propagation during round $\round$. We will show that $Y=\emptyset$, proving that $H$ is fully burned by the end of round $\rho$ (in which a redundant source must be chosen).

For a contradiction, suppose that $Y \neq \emptyset$, and fix $y \in Y$.  For each $a \in V_{\round-1}$, there must be a unique point $w_a \in V$ so that the triple $\{y,a,w_a\}$ is a triple of $H$.  Since $y \notin X$, it must be that $w_a \notin V_{\round-1}$.  Thus, $\{w_a : a \in V_{\round-1} \} \subseteq V \setminus V_{\round-1}$. Since $\{w_a : a \in V_{\round-1} \}$ must have cardinality $|V_{\round-1}|$, this means that $|V \setminus V_{\round-1}| \geq |V_{\round-1}|$, and so $|V| \geq 2|V_{\round-1}|$.  However, we know that $|V|<2|V_{\round-1}|-2$, so we have derived a contradiction.  
\end{proof}

Next we present an upper bound for the burning number of an STS$(v)$. An analogous result for lazy burning is known from the literature, see Theorem~\ref{lazy_upper_bound_sts}.

\begin{theorem}
\label{upper_bound_sts}
If $H=(V,\mathcal{B})$ is an STS(v), then $b(H)\leq \lceil \log_2(v)\rceil+1$.
\end{theorem}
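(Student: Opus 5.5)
We prove $b(H)\le \lceil\log_2(v)\rceil+1$ by a greedy strategy. At each round the arsonist picks a source that is not in the smallest subsystem generated by the previously chosen sources. Each new source then at least doubles the size of that subsystem.

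\emph{Closure.} For a set $S\subseteq V$, let $\langle S\rangle$ be its closure: the smallest subset of $V$ containing $S$ that is closed under completing triples. Equivalently, $\langle S\rangle$ is the point set of the smallest subsystem containing $S$, or the set of points eventually burned by lazy burning from $S$. We use the standard fact that if $W$ is the point set of a subsystem (closed, with $|W|=w$) and $x\notin W$, then $|\langle W\cup\{x\}\rangle|\ge 2w+1$. Indeed, the points $x$, the $w$ points of $W$, and the third points $z_a$ of the triples $\{x,a,z_a\}$ for $a\in W$ are all in the closure. The $z_a$ are distinct, lie outside $W$ (closedness of $W$ and $x\notin W$), and differ from $x$.

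\emph{Strategy.} Choose $u_1$ arbitrarily. Given $u_1,\dots,u_{k}$, if $F(k)\neq V$, choose $u_{k+1}$ to be a point outside $\langle u_1,\dots,u_k\rangle$ if one exists. Otherwise choose any valid vertex.

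\emph{Key claim.} Let $C_k=\langle u_1,\dots,u_k\rangle$. Then every point of $C_k$ is burned by the end of round $k+m$ for some bounded lag $m$. This is the delicate part. Lazy closure may require many propagation steps, whereas we only have one round per source. We must compare the round-based burning with the nested closures, for example by showing that $C_{k}$ is entirely burned by the end of round $k+1$.

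I would first try to prove directly that $F(k+1)\supseteq C_{k}$, fails in general, so instead I would track the closed sets differently. Set $W_k=F(k)$ and aim to show $|F(k)|\ge 2^{k-1}$ roughly. When $u_{k+1}\notin F(k)$ is chosen, in round $k+2$ all third points of triples $\{u_{k+1},a,z\}$ with $a\in F(k)$ burn. There are $|F(k)|$ such triples, giving distinct $z_a$. Some $z_a$ may already be in $F(k+1)$, but if $z_a\in F(k)$ then $\{a,z_a\}\subseteq F(k)$. In that case $u_{k+1}$ would already have burned by propagation in round $k+1$, so $u_{k+1}$ was redundant.

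\emph{Counting.} Choose sources non-redundant whenever possible, that is, pick $u_{k+1}\notin F(k+1)$ if $F(k+1)\ne V$. Then we get $|F(k+2)|\ge |F(k)|+|F(k)|+1=2|F(k)|+1$, plus the new source. Refining this to a per-round doubling $|F(k+1)|\ge 2|F(k-1)|$ with an extra $+1$ from the source yields $|F(\rho)|\ge 2^{\rho-1}$ after aligning the indices. We then conclude $F(\rho)=V$ once $2^{\rho-1}\ge v$, i.e.\ $\rho=\lceil\log_2 v\rceil+1$. If the process completes earlier, the final source is redundant, which is allowed.

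\emph{Main obstacle.} The main obstacle is the off-by-one bookkeeping: the doubling holds only with a one-round lag, so an argument over pairs of rounds gives $\sqrt2$ growth. To get full doubling, I would show that $z_a$ for $a\in F(k-1)$ burns in round $k+1$, while $z_a$ for $a\in F(k)\setminus F(k-1)$ burns by round $k+2$. Combined with an induction hypothesis of the form $|F(k)|\ge 2^{k-1}$ together with $|F(k)\setminus F(k-1)|$ large, this should give a clean doubling each round.
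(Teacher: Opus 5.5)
Your proposal has a genuine gap, and it is the one you flag yourself: with your selection rule you never get doubling in a single round. Non-redundancy of $u_{k+1}$ only shows that the third points $z_a$ of the triples $\{u_{k+1},a,z_a\}$, $a\in F(k)$, avoid $F(k)$. They may still lie in $F(k+1)\setminus F(k)$, i.e.\ they may have caught fire by propagation in the very round $u_{k+1}$ was chosen. So you only obtain $|F(k+2)|\ge 2|F(k)|+2$. Your ``refinement'' $|F(k+1)|\ge 2|F(k-1)|$ is the same two-round recursion. It gives $|F(\rho)|$ of order $2^{\rho/2}$ and a bound near $2\log_2 v$, not $2^{\rho-1}$.

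The repair in your last paragraph cannot succeed as stated, because under merely non-redundant choices the invariant $|F(k)|\ge 2^{k-1}$ is false. For example, start from the configuration of Lemma~\ref{first4rounds}. Add the triples $\{u_1,c,d\}$, $\{u_2,b,d\}$, $\{b,c,u_4\}$, and $\{a,b,w\}$, $\{u_4,u_1,w\}$, $\{a,c,w'\}$, $\{u_4,u_2,w'\}$ with new points $w,w'$. This partial system is linear; embed it in an STS. Then $u_4$ is non-redundant, yet only the five third points of $\{u_4,x\}$, $x\in\{u_1,u_2,u_3,a,d\}$, are new in round $5$. So $f(5)=14<2^4$. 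Timing is also not the issue: every $z_a$ with $a\in F(k)$ burns in round $k+1$ once $u_k$ and $a$ are burned. The only question is whether these points are new.

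The missing idea is the look-ahead rule used in the paper. In round $k$, pick $u_k$ so that no triple contains $u_k$ together with two points of $F(k)\setminus\{u_k\}=F(k-1)\cup P_k$, where $P_k$ is the set of points burned by propagation in round $k$. This set is determined before $u_k$ is chosen, so the rule is implementable. Now apply your own redundancy argument to $F(k)\setminus\{u_k\}$ rather than to $F(k-1)$. It shows that the $|F(k)|-1$ points $z_a$, $a\in F(k)\setminus\{u_k\}$, are distinct, lie outside $F(k)$, and burn in round $k+1$. With the next source this gives $|F(k+1)|\ge 2|F(k)|$. If no such $u_k$ exists, every unburned point lies in a triple with two points of $F(k)$, so everything burns in round $k+1$ (with a redundant source). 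Since then $2^{k-1}\le|F(k)|<v$, this happens no later than round $\lceil\log_2 v\rceil+1$.

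Your first rule, $u_k\notin\langle u_1,\dots,u_{k-1}\rangle$, already implies this look-ahead property, because $F(k)\setminus\{u_k\}\subseteq\langle u_1,\dots,u_{k-1}\rangle$. So no ``bounded lag'' claim was ever needed. But that rule becomes unavailable as soon as the sources generate $H$, possibly after only $b_L(H)$ rounds. After that, ``any valid vertex'' gives no control, so you must weaken the rule to the one-step condition relative to the current burned set.
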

\begin{proof}
Recall that $F(\round)$ is the set of points that are on fire at the end of round $\round$.  We will show by induction that there is a burning sequence such that $|F(\round)|\geq\min\left\{v,2^{\round-1}\right\}$ for each $\round\geq 1$.
    It then naturally follows that all points are burned by round $\lceil \log_2(v)\rceil+1$. 
    
    For the base case, observe that $|F(1)|=1=2^{1-1}$. 
    In order to inductively define a burning sequence we assume that on round $\round$ not all vertices have been burned; otherwise $|F(\round)|=v$. We thus assume as the inductive hypothesis that 
    $|F(\round)| = 2^{\round-1}$. 
    As the source for round $\round$, 
    we choose a point $s_\round \in V$ such that $s_\round$ will not be burned by propagation in the next round, if there is such a point. 
    If there is no such point, then all the unburned points will be burned during the propagation of round $\round+1$, so choosing a redundant source in round $\round+1$ completes the burning sequence. 
    
    Now, assuming that the sequence is not finished, we consider the propagation that occurs in round $\round+1$. 
    For each $x\in F(\round)\setminus\{s_\round\}$, consider the triple $\{s_\round ,x,y_x\}$ and define
    \[
    T_\round = \{y_x : x\in F(\round)\setminus\{s_\round\}\}.
    \]
    Note that the $y_x$ are distinct, since each $y_x$ appears in exactly one triple with $s_\round$, 
    so $|T_\round|=|F(\round)\setminus\{s_\round\}|$.
    If $y_x\in F(\round)$, then $s_\round$ would have burned by propagation one round after it was chosen (i.e.,~in round $\round+1$), which contradicts how the sources were chosen. So, it must be the case that $T_\round \subseteq V \setminus (F(\round) \cup \{s_\round\})$; it follows that 
    the points in $T_\round$  will burn during the propagation step of round $\round+1$ due to the triples $\{s_\round,x,y_x\}$. 
    Counting the source that is chosen in round $\round+1$, the number of points that are on fire at the end of round $\round+1$ is at least 
    \[
    |F(\round)|+|T_\round|+1 \geq (2^{\round-1})+(2^{\round-1}-1)+1=2^{\round},
    \]
     completing the induction. 

    Therefore $|F(\round)| \geq \min\{v,2^{\round-1}\}$.
    The burning sequence described above has $\tau$ rounds, where $\tau$ is the largest integer such that $2^{\tau-1}\leq v$.
    Since $2^{\tau-1}\leq v$ implies that $\tau\leq\lceil \log_2(v)\rceil+1$, we must then have that 
    $b(H)\leq \tau\leq\lceil \log_2(v)\rceil+1$.  \end{proof}

Combining Corollary~\ref{cor:lowerbound_burning_logs} with Theorem~\ref{upper_bound_sts} yields the following result. 

\begin{theorem}
\label{thm:ConfinedBurningNumber}
If $H$ is an STS$(v)$ and $\beta = 1.07925087759785$, then
\[\lfloor \log_2(\log_\beta(v/2))\rfloor +1\leq b(H)\leq   \lceil \log_2(v)\rceil+1.\]
\end{theorem}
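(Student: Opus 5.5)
This theorem is a direct combination of two earlier results, so the plan is to read off each inequality separately.

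For the upper bound $b(H)\le\lceil\log_2(v)\rceil+1$, I will simply invoke Theorem~\ref{upper_bound_sts}. It applies verbatim to any STS$(v)$, so nothing further is needed.

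For the lower bound I will use Corollary~\ref{cor:lowerbound_burning_logs}. It gives the strict inequality $b(H)>\lfloor\log_2(\log_\beta(v/2))\rfloor$ with the same constant $\beta=1.07925087759785$. Both $b(H)$ and $\lfloor\log_2(\log_\beta(v/2))\rfloor$ are integers, so the strict inequality becomes $b(H)\ge\lfloor\log_2(\log_\beta(v/2))\rfloor+1$. This is exactly the stated lower bound.

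The only point needing care is small values of $v$, where $\log_\beta(v/2)$ might be at most $1$ or the nested logarithm might be undefined or negative. For $v\ge 3$ we have $v/2>1$, so $\log_\beta(v/2)>0$ and the expression is defined. If $\log_2(\log_\beta(v/2))$ is negative, the floor plus one is at most $0$. This is trivially below $b(H)\ge 1$.

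For $v=1$, the expression $\log_\beta(1/2)$ is negative and its $\log_2$ is undefined. I would note that the statement is implicitly for $v\ge 3$, i.e.\ nontrivial systems, exactly as in Corollary~\ref{cor:lowerbound_burning_logs}.

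I do not expect any genuine obstacle. The main thing to get right is the integrality step that turns the strict inequality into the $+1$ form.
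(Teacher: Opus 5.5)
Your route is exactly the paper's: the theorem is stated there as the combination of Corollary~\ref{cor:lowerbound_burning_logs} and Theorem~\ref{upper_bound_sts}, and your integrality step is correct. However, there is a genuine gap, and your treatment of small $v$ looks in the wrong place. For $v\ge 3$ the nested logarithm is always defined and in fact exceeds $\log_2(\log_\beta 1.5)>2$, so definedness is never the issue. The real issue is that Corollary~\ref{cor:lowerbound_burning_logs} is not valid for all $v$. Its proof applies the bound $h(\rho)\le 2\beta^{2^\rho}$ from Theorem~\ref{Size of h(r)} at $\rho=b(H)$. That bound is only proved for $\rho\ge 9$, and it is false for $3\le\rho\le 8$; e.g.\ $2\beta^{16}\approx 6.78<8=h(4)$ and $2\beta^{32}\approx 22.96<25=h(5)$.

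This gives an actual counterexample to the statement. For $v=7$ we have $\log_\beta(3.5)\approx 16.43\in[16,32)$, so the left-hand side is $\lfloor 4.04\rfloor+1=5$. The right-hand side is $\lceil\log_2 7\rceil+1=4$, and indeed $b(\PG(2,2))=4$: in the representation used in the proof of Theorem~\ref{cor:proj_regular_burning_solved}, burning $0,1,2$ puts every point on fire by round $4$. So no argument can prove the inequality for every STS$(v)$ as written, and simply citing the corollary imports its error.

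Your argument does go through once a hypothesis is added. If $v>h(8)=603\,069\,104$, then Lemma~\ref{big_lem} forces $b(H)\ge 9$, so Theorem~\ref{Size of h(r)} applies at $\rho=b(H)$ and the corollary, hence the full statement, holds. More precisely, write $k=\lfloor\log_2(\log_\beta(v/2))\rfloor$. Lemma~\ref{big_lem} gives the claimed bound $b(H)\ge k+1$ whenever $v>h(k)$. This can fail only if $2\beta^{2^k}\le v\le h(k)$ for some $3\le k\le 8$, and the admissible orders in these windows are $7$, $25$, $265$, $34\,731$ and $603\,069\,103$. You must either exclude these orders or, for small $v$, use $\rho_v$ from Theorem~\ref{small_lem} in place of the explicit expression. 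The bound is false at $v=7$ and unproven at the other four orders.
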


We note that the upper bound of Theorem~\ref{thm:ConfinedBurningNumber} is attained projective triple systems (as we will see from Theorem~\ref{cor:proj_regular_burning_solved}), and the lower bound is attained by the triple system constructed in Theorem~\ref{thm:AllBurningNumbers}, up to a small additive constant. 
Consider a triple system $\widehat{H}$ constructed in Theorem~\ref{thm:AllBurningNumbers} with order $v \geq h(\rho-1)+1$.  Using that $h(\rho-1) \geq 2 \alpha^{2^{\rho-1}}$ from Theorem~\ref{Size of h(r)} and rearranging gives $v/2-7/4 \geq \alpha^{2^{\rho-1}}$, so that $v/2 > \alpha^{2^{\rho-1}}$.  Hence $\log_2(\log_{\alpha}(v/2)) > \rho-1$, so that $b(\widehat{H}) = \rho < \log_2(\log_{\alpha} (v/2)) + 1$.  Thus by Theorem~\ref{thm:ConfinedBurningNumber}, 
\[
\lfloor \log_2(\log_{\beta}(v/2))\rfloor+1 \leq b(\widehat{H}) \leq \lfloor\log_2(\log_{\alpha} (v/2))\rfloor + 1,
\]
and the upper and lower bounds here differ by at most $1$.
In particular, the lower bound can also be written as 
\[
\lfloor \log_2(\log_\beta(v/2))\rfloor +1 = \lfloor \log_2(c\log_2(v/2))\rfloor +1 = \lfloor \log_2(\log_2(v/2))+\log_2(c)\rfloor +1,
\]
where $c=\frac{1}{\log_2(\beta)} \approx 9.08840846208484$ and $\log_2(c) \approx 3.18402767561964$.
On the other hand, the upper bound is 
 \[
 \lfloor \log_2(\log_\alpha(v/2))\rfloor+1 = \lfloor \log_2(c_1\log_2(v/2))\rfloor+1 = \lfloor \log_2 (\log_2(v/2)) + \log_2(c_1)\rfloor+1,
 \]
 where $c_1= \frac{1}{\log_2(\alpha)} \approx 9.08840846208595$ and $\log_2(c_1) \approx 3.18402767561982$.
Thus the value of $b(\widehat{H})$ can exceed the lower bound of Theorem~\ref{thm:ConfinedBurningNumber} by at most $1$.

\section{Lazy Burning and Dimension}
\label{sec:LazyBurning}

In this section we turn our attention to the lazy burning process.  Before explicitly focusing on Steiner triple systems, we develop a simple polynomial-time algorithm that, when given a hypergraph $H$, produces a lazy burning set $S$ for $H$.  Algorithm~\ref{lazy_alg} below accomplishes this task by choosing any unburned vertex as a source, setting it on fire, and then allowing the fire to propagate until it cannot spread any further on its own; these steps are repeated until the hypergraph is fully burned.  This approach constructs a lazy burning set $S$ in a manner that avoids adding extraneous vertices to the lazy burning set that is built.  Hence $S$ is a minimal lazy burning set (i.e., there is no lazy burning set $S'$ that is a proper subset of $S$), although not necessarily minimum.  However, the algorithm does produce minimum lazy burning sets for some families of input hypergraph, such as projective triple systems.  

\begin{algorithm}[ht]

\SetKwInOut{Input}{Input}\SetKwInOut{Output}{Output}

\Input{A hypergraph $H=(V,E)$}

\Output{A lazy burning set for $H$}

\BlankLine

Initialize $S$ to be an empty set

\Repeat{$H$ is fully burned}{

Select any valid source vertex, burn it and add it to $S$

\Repeat{no vertex catches fire through propagation}{

Propagate the fire

}

}

\Return{$S$}
\text{(a lazy burning set)}

\caption{Na\"{\i}vely construct a lazy burning set}
\label{lazy_alg}
\end{algorithm}

\subsection{Lazy Burning and Subsystems}

We now turn our attention specifically to the lazy burning process on Steiner triple systems.  
As a first observation, suppose the arsonist burns some subset $S$ of vertices of a Steiner triple system $H$; 
after the fire propagates, the vertices which have burned are precisely those of some subsystem of $H$.  
This can be seen by noting that for any two burning vertices $x$ and $y$, the third vertex of the block containing $x$ and $y$ must burn by propagation.  
With respect to Algorithm~\ref{lazy_alg} applied to a Steiner triple system, this observation means that each time the inner loop has finished, the set of burning vertices induces a subsystem of the input system.

The following result is now immediate, and characterizes lazy burning sets in terms of the existence of proper subsystems.  

\begin{theorem}
    Let $H$ be a Steiner triple system.  A set $S$ of vertices in $H$ is a lazy burning set for $H$ if and only if there is no proper subsystem of $H$ that contains $S$.
\end{theorem}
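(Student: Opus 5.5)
The plan is to identify the final burned set with the closure of $S$ and prove both directions through it. Let $H=(V,\mathcal{B})$ and start the lazy process from $S$. Since no vertex ever becomes unburned, the burned sets form an increasing chain in the finite set $V$. The chain therefore stabilizes at some set $\overline{S}\supseteq S$. By definition, $S$ is a lazy burning set exactly when $\overline{S}=V$.

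First I will check that $\overline{S}$, with the triples of $\mathcal{B}$ contained in it, is a subsystem of $H$. Take distinct $x,y\in\overline{S}$ and let $z$ be the third point of the unique triple $\{x,y,z\}$. Because the process has stabilized, $z$ must already be burned, so $z\in\overline{S}$. Hence every pair of points of $\overline{S}$ lies in a triple inside $\overline{S}$, and $(\overline{S},\{B\in\mathcal{B}: B\subseteq\overline{S}\})$ is a Steiner triple system contained in $H$. This is the observation made just before the theorem.

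Next I will show that $\overline{S}$ is the smallest subsystem containing $S$. Let $(W,\mathcal{A})$ be any subsystem of $H$ with $S\subseteq W$. I argue by induction on rounds that every burned vertex lies in $W$. Suppose $u$ burns by propagation in some round. Then there is a triple $\{u,x,y\}\in\mathcal{B}$ with $x,y$ burned earlier, so $x,y\in W$ by the induction hypothesis. Since $(W,\mathcal{A})$ is an STS, the pair $x,y$ lies in a triple of $\mathcal{A}\subseteq\mathcal{B}$. By linearity of $H$, that triple must be $\{u,x,y\}$, so $u\in W$. It follows that $\overline{S}\subseteq W$.

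Finally, I combine the two facts. If $S$ is not a lazy burning set, then $\overline{S}\neq V$, so $\overline{S}$ is a proper subsystem containing $S$. Here I take "proper" to mean proper point set; a proper point set forces a proper block set once $\overline{S}\neq\emptyset$. If $S=\emptyset$, the empty system is a proper subsystem, and both sides of the statement hold trivially when $V$ is nonempty. Conversely, suppose some proper subsystem $W\subsetneq V$ contains $S$. Then the second step gives $\overline{S}\subseteq W\neq V$, so $S$ is not a lazy burning set.

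The only delicate points are the degenerate cases, namely $S=\emptyset$ and what "proper" means for tiny systems. The main step, minimality of $\overline{S}$, is a direct induction using linearity.
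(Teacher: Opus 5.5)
Your proof is correct and takes essentially the paper's route. The paper calls the statement immediate from the observation that the stabilized burned set is a subsystem containing $S$. The converse direction rests on the fact that fire cannot escape a subsystem containing $S$, which the paper uses only implicitly here (and in the proof of Theorem~\ref{SmallestSubsystemBurns}); you make this explicit with your induction on rounds via linearity, and you also handle the degenerate cases ($S=\emptyset$, the meaning of ``proper'') that the paper passes over.
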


As we will see in the next theorem, the subsystem whose vertices burn if the arsonist burns a set of vertices $S$ is the {\em smallest} subsystem of $H$ containing $S$.  To this end, we make the following definition.
\begin{definition}
\label{def:generated subsystem}
    Let $H=(V,\mathcal{B})$ be an $\mathrm{STS}(v)$ and let $S \subseteq V$. 
    The {\em subsystem generated by $S$}, written $\langle S \rangle$,  is the unique minimal subsystem of $H$ 
    that contains $S$.  
    Equivalently, $\langle S \rangle$ is the intersection of all subsystems of $H$ that contain $S$.  
    We say that $S$ {\em generates} $\langle S \rangle$. 
\end{definition}

\begin{theorem} \label{SmallestSubsystemBurns}
Suppose the arsonist burns a set $S$ of vertices in a Steiner triple system $H=(Y,\mathcal{B})$, and let $(Z,\mathcal{C}) = \langle S \rangle$ be the subsystem of $H$ generated by $S$. The fire will propagate until precisely the vertices in $\langle S \rangle$ have burned.
\end{theorem}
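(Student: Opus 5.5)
Let $W \subseteq Y$ be the set of vertices burned once the lazy process stabilizes; since $Y$ is finite and the burned set only grows, it stabilizes after finitely many rounds. The plan is to show the two inclusions $W \subseteq Z$ and $Z \subseteq W$.

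For $W \subseteq Z$, we will show by induction on the round that every burned vertex lies in $Z$. Initially the burned set is $S \subseteq Z$. Suppose all vertices burned by the end of some round lie in $Z$, and let $u$ burn by propagation in the next round. Then there is a triple $\{u,x,y\} \in \mathcal{B}$ with $x,y$ already burned, so $x,y \in Z$. Because $(Z,\mathcal{C})$ is a Steiner triple system, the pair $\{x,y\}$ lies in some triple of $\mathcal{C}$. Since $\mathcal{C} \subseteq \mathcal{B}$ and $H$ is linear, that triple must be $\{u,x,y\}$, which gives $u \in Z$.

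For $Z \subseteq W$, we will show that $W$ is the point set of a subsystem of $H$ containing $S$; minimality of $\langle S\rangle$ (equivalently, Definition~\ref{def:generated subsystem} as an intersection) then gives $Z \subseteq W$. Clearly $S \subseteq W$. Let $\mathcal{B}_W = \{B \in \mathcal{B} : B \subseteq W\}$. Take distinct $x,y \in W$ and the unique block $\{x,y,z\} \in \mathcal{B}$. If $z \notin W$, then $z$ would burn by propagation in the next round, contradicting stabilization. So $z \in W$, and every pair of $W$ lies in exactly one block of $\mathcal{B}_W$, meaning $(W,\mathcal{B}_W)$ is a subsystem. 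We also need to handle the degenerate cases $|W|\le 1$, where we interpret a single point or the empty set as a trivial subsystem. These cases are consistent with the definition of $\langle S\rangle$ for $|S|\le 1$.

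The only delicate point is making sure the notion of subsystem used in Definition~\ref{def:generated subsystem} allows these trivial cases. Beyond that, the key step is the closure argument in the second inclusion, which is exactly the observation made before the theorem. Combining the two inclusions gives $W = Z$.
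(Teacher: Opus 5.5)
Your proof is correct and follows essentially the same route as the paper. The stabilized burned set is closed, so it is a subsystem containing $S$ and hence contains $\langle S\rangle$; closure of $\langle S\rangle$ (via linearity) keeps the fire from ever leaving it, and your explicit induction on rounds makes this second step more rigorous than the paper's informal contradiction argument.
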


\begin{proof}
    As previously noted, the fire will propagate to the vertices of a subsystem of $H$, say $(X,\mathcal{A})$.  We suppose that $(X,\mathcal{A})$ is distinct from $(Z,\mathcal{C})$, and seek a contradiction.  Since $(X,\mathcal{A})$ is a subsystem of $H$ containing $S$, it must contain $(Z,\mathcal{C})$ as a proper subsystem.  Thus no vertex of $X \setminus Z$ can be in a block $B \in \mathcal{B}$ with two vertices of $Z$.  Clearly the fire must propagate to the vertices of $Z$; however, it then cannot spread to any vertex of $X \setminus Z$, yielding a contradiction. 
\end{proof}

Theorem~\ref{SmallestSubsystemBurns} allows us to establish a connection between lazy burning and the dimension of a Steiner triple system. 
Dimension has previously been considered in the literature; see, for instance~\cite{Hilton1977, HiltonTeirlinck1980, TeirlinckThesis, Teirlinck1979, Zeitler1985, Zeitler1987}.

\begin{definition} 
\label{def:dimension}
    The {\em dimension} of a Steiner triple system $H$, denoted $\dim(H)$, is one less than the minimum cardinality of a subset that generates $H$.
\end{definition}
Definition~\ref{def:dimension} generalizes the dimension of a projective or affine triple system.  Note that the dimension of a Steiner triple system $H$ is the maximum integer $d$ such that any set of $d$ vertices in $H$ is contained within a proper subsystem of $H$.
As a consequence of Theorem~\ref{SmallestSubsystemBurns}, we have the following correspondence between lazy burning number and dimension.
\begin{theorem}
\label{thm:dim_lazy_equiv}
    For any Steiner triple system $H$, 
   $\dim(H)=b_L(H)-1$.
\end{theorem}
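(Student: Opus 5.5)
The plan is to show that the lazy burning sets of $H$ are exactly the subsets that generate $H$. Once this is established, the minimum sizes coincide. By Definition~\ref{def:dimension}, that common minimum is $\dim(H)+1$, and by definition it is also $b_L(H)$, which gives the identity.

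First, let $S$ be any set of vertices of $H=(V,\mathcal{B})$. By Theorem~\ref{SmallestSubsystemBurns}, if the arsonist burns $S$ and lets the fire propagate, the burned vertices are precisely the vertices of $\langle S\rangle$. Hence $S$ is a lazy burning set if and only if the vertex set of $\langle S\rangle$ is all of $V$.

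Next, I check that a subsystem of $H$ whose point set is $V$ must be $H$ itself. Every pair of points of $V$ lies in a unique block of $H$. It also lies in some block of the subsystem, since that subsystem is itself an STS on $V$. These blocks must agree, so the subsystem contains every block of $\mathcal{B}$. Therefore $S$ is a lazy burning set if and only if $\langle S\rangle = H$, that is, if and only if $S$ generates $H$.

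Taking minimum cardinalities on both sides, $b_L(H)$ equals the minimum size of a generating set, which is $\dim(H)+1$.

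There is no real obstacle here. The only point needing care is the step that equal point sets force equal block sets; it is what lets "burns every vertex" be identified with "generates $H$" as a subsystem rather than merely as a point set.
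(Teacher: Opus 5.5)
Your proposal is correct and follows the paper's approach: the paper obtains this theorem as an immediate consequence of Theorem~\ref{SmallestSubsystemBurns} together with Definition~\ref{def:dimension}, which is exactly your argument. Your extra check that a subsystem on the full point set must contain every block of $H$ is a detail the paper leaves implicit.
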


While any set of burning vertices in a Steiner triple system, $H$, propagates to a subsystem, it is not necessarily the case that the lazy burning number of a subsystem is less than the lazy burning number of $H$.  Clearly any maximal subsystem of $H$ has lazy burning number at least $b_L(H)-1$; otherwise, adding any point not in the subsystem would yield a lazy burning set for $H$ of size less than $b_L(H)$.  However, it is possible that there may exist a maximal proper subsystem with larger lazy burning number.  
This fact has been explored in the context of dimension~\cite{Hilton1977}, leading to the classification of triple systems as {\em degenerate} or {\em non-degenerate}.  While these concepts appear in the literature with respect to the dimension of a Steiner triple system, we state them here in terms of lazy burning number.

\begin{definition}
Let $H$ be a Steiner triple system. If every maximal proper subsystem of $H$ has lazy burning number $b_L(H)-1$, then $H$ is {\em non-degenerate}.  Otherwise, i.e.\ if there is a maximal proper subsystem of $H$ with lazy burning number at least $b_L(H)$, $H$ is {\em degenerate}.
\end{definition}

In fact, the only known non-degenerate Steiner triple systems with lazy burning number greater than $3$ are the projective and affine triple systems~\cite{HiltonTeirlinck1980}.  In the case of a degenerate system $H$, Hilton~\cite{Hilton1977} showed that the difference between the lazy burning number of a subsystem and the lazy burning number of $H$ can be arbitrarily large.

\begin{theorem}[\cite{Hilton1977}]
For any integers $d \geq 3$ and $e \geq 3$, there is a Steiner triple system $H$ with a subsystem $H'$ such that $b_L(H)=d$ and $b_L(H')=e$.
\end{theorem}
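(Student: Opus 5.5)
The plan is to handle $d=3$ first, for every $e\ge 3$, and then raise the lazy burning number of the big system one unit at a time while keeping $H'$ as a subsystem. For $H'$ I take a projective triple system $\PG(e-1,2)$, whose lazy burning number is $e$ (equivalently, its dimension is $e-1$; this is quoted from \cite{HiltonTeirlinck1980}/\cite{Zeitler1987}). If one prefers to avoid that citation, any STS with $b_L=e$ will do, and the construction below never uses the structure of $H'$ beyond one fixed generating set $g_1,\dots,g_e$.

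\textbf{Step 1 ($d=3$).} Build a partial Steiner triple system $P$ on the points of $H'$ together with new points $x_1,x_2,x_3$ and some auxiliary new points. Keep all triples of $H'$, and add a chain of new triples with the following property: closing $\{x_1,x_2,x_3\}$ under propagation inside $P$ forces each $g_i$ to burn. Concretely, $\{x_1,x_2,y_1\}$ and $\{x_1,x_3,y_2\}$ create fresh points; fresh pairs among the burnt new points then create further fresh points. Once there are enough unused pairs of new points, add triples $\{y_j,y_k,g_i\}$, one for each $i$. Linearity is checked by making every added triple contain at least two new points, through pairs used only once.

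Next, embed $P$ in some Steiner triple system $\widetilde H$, using a standard embedding theorem for finite partial STSs such as Treash's or Bryant--Horsley's $2u+1$ bound. Take $H=\langle x_1,x_2,x_3\rangle$ inside $\widetilde H$. By Theorem~\ref{SmallestSubsystemBurns}, $H$ is exactly what burns from $\{x_1,x_2,x_3\}$, so it contains every $g_i$ and hence contains $\langle g_1,\dots,g_e\rangle=H'$ as a subsystem. Moreover $b_L(H)\le 3$, and $b_L(H)\ge 3$ because two points generate only a single triple. So $b_L(H)=3$ and $b_L(H')=e$.

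\textbf{Step 2 (raising $d$).} Use the doubling construction $D(H)$ of order $2v+1$. Its points are $V\cup V'\cup\{\infty\}$, where $V'=\{x':x\in V\}$. Its triples are the triples of $H$, the triples $\{x,x',\infty\}$, and the triples $\{x,y',z'\}$ for each $\{x,y,z\}\in\mathcal B$ in all three positions. $H$, and hence $H'$, is a subsystem of $D(H)$, so it suffices to show $b_L(D(H))=b_L(H)+1$; iterating $d-3$ times from Step~1 then gives the theorem. The upper bound is easy: a generating set of $H$ together with $\infty$ (or any primed point) generates $D(H)$, since it yields $V$, then all of $V'$ via $\{x,x',\infty\}$.

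\textbf{Main obstacle: the lower bound $b_L(D(H))\ge b_L(H)+1$.} Here I would use the projection $\pi(x)=\pi(x')=x$, which is undefined on $\infty$, together with the parity map $\epsilon$ that is $0$ on $V$ and $1$ on $V'\cup\{\infty\}$. Every triple of $D(H)$ either has $\epsilon$-sum even with $\pi$-image a triple of $H$, or has the form $\{x,x',\infty\}$. From this I would show that the closure of a set $S$ projects into $\langle \pi(S)\rangle$, augmented only when $\infty$ or a matched pair $x,x'$ lies in the closure. Then I would argue that if $|S|=b_L(H)$ generates $D(H)$, then $\pi(S)$ must already generate $H$ with no slack. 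In that situation the points of $S$ determine a consistent "sign" assignment on $\langle S\rangle$ that never produces both $x$ and $x'$, contradicting that everything burns. Making this sign/affine-functional argument airtight, in the spirit of the hyperplane argument for $\PG(n,2)$, is where I expect the real work. If it resists, the fallback is to replace doubling by a product with a small system whose dimension is known additive, or to repeat Step~1 with $d$ new generators and prove minimality via a degree-counting obstruction.
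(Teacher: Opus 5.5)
Step~1 is sound and settles $d=3$ for every $e$; the case $e\le d$ is trivial via $\PG(e-1,2)\subseteq\PG(d-1,2)$. The gap is in Step~2. The identity $b_L(D(H))=b_L(H)+1$ is not merely hard to prove; it is false in general.

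To see this, take a minimum lazy burning set $T$ of $H$, a point $s\in T$, and burn $S=(T\setminus\{s\})\cup\{s'\}$, a set of size $b_L(H)$. Let $C$ be its closure.

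\textbf{Case $\infty\in C$.} The triples $\{x,x',\infty\}$ give $C=D(C\cap V)$ with $C\cap V\supseteq T$, so $C$ is everything.

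\textbf{Case $\infty\notin C$.} Then $C=A\cup B'$ with $A\cap B=\emptyset$. Also $A\cup B$ is closed in $H$ and contains $T$, so $A\cup B=V$. Moreover $A$ is a subsystem meeting every triple of $H$ in one or three points, which forces $|A|=(v-1)/2$.

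Such an $A$ is exactly your ``consistent sign assignment''. In $\PG(n,2)$ it is a hyperplane, but a general $H$ need not have one, and then $b_L(D(H))\le b_L(H)$. Since $(v-1)/2$ must itself be admissible, this happens whenever $v\equiv 1,9\pmod{12}$.

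Concretely, in $\AG(2,3)$ take $a=(0,0)$, $b=(1,0)$, $c=(0,1)$ and burn $\{a,b,c'\}$:
\begin{itemize}
\item $(2,0)$ burns first;
\item then $(0,2)'$, $(2,2)'$ and $(1,2)'$ burn, via the blocks $\{a,c,(0,2)\}$, $\{b,c,(2,2)\}$, $\{(2,0),c,(1,2)\}$ of $H$;
\item then $(1,2)$ burns via $\{(1,2),(0,2)',(2,2)'\}$;
\item then $\infty$ burns, and then everything.
\end{itemize}
So $b_L(D(\AG(2,3)))=3=b_L(\AG(2,3))$. This matches the paper's remarks that every STS$(19)$ has lazy burning number $3$, and that among non-degenerate systems only the projective ones gain under doubling.

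Your Step-1 system comes out of a black-box embedding, so you control neither its order nor its subsystems. You therefore cannot guarantee the separating subsystems that doubling needs, and if its order is $\equiv 1,9\pmod{12}$ the very first doubling provably leaves $b_L=3$. Hence the degenerate case $e>d\ge 4$, which is the real content of the theorem, remains unproved. Your fallbacks are not yet arguments either: nothing in an uncontrolled embedding stops three points from generating.

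The missing ingredient is a lower bound that the construction inherits. One that works is as follows. Regard an STS as a Steiner quasigroup, with $x\circ y$ the third point of the block through $x,y$ and $x\circ x=x$. A surjective homomorphism then sends lazy burning sets onto lazy burning sets.

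For $d\ge 4$, take the direct product $\PG(d-1,2)\times K$, where $K$ is your Step-1 system.
\begin{itemize}
\item It contains $H'$ inside $\{q\}\times K\cong K$.
\item Projecting onto the first factor gives $b_L\ge d$.
\item For $b_L\le d$, burn $(q_1,k_1),(q_2,k_2),(q_3,k_3),(q_4,k_1),\dots,(q_d,k_1)$, where $q_1,\dots,q_d$ is a basis and $\{k_1,k_2,k_3\}$ generates $K$. The generated subsystem projects onto both factors. Steiner quasigroups form a Mal'cev variety and $\PG(d-1,2)$ is simple (a short check). So by Goursat's lemma the generated subsystem is either the whole product or the graph of a homomorphism $K\to\PG(d-1,2)$. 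The two sources with second coordinate $k_1$ exclude the latter.
\end{itemize}
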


\noindent
In the particular case that $d=3$, Zeitler~\cite{Zeitler1985} proved that every STS$(v)$ can be embedded in an STS$(2v+1)$ with lazy burning number $3$.

Despite the potential existence of a subsystem with large lazy burning number, the next theorem shows that whenever a Steiner triple system $H$ has lazy burning number $b_L(H) \geq 3$, then $H$ must have {\em some} subsystem with lazy burning number that is one less than $b_L(H)$.

\begin{theorem}
\label{theorem-subSTS}
If $H$ is a Steiner triple system and $b_L(H) \geq 3$, then $H$ contains a proper subsystem $H'$ such that $b_L(H') = b_L(H) - 1$.
\end{theorem}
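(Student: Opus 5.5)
Let $b_L(H)=\ell\geq 3$ and fix a minimum lazy burning set $S=\{s_1,\dots,s_\ell\}$, so that $\langle S\rangle = H$ by Theorem~\ref{SmallestSubsystemBurns}. The plan is to drop one point and pass to the subsystem generated by the rest: set $H'=\langle S\setminus\{s_\ell\}\rangle$. Because $S$ is a minimum lazy burning set, $S\setminus\{s_\ell\}$ cannot be a lazy burning set for $H$. Hence $H'$ is a proper subsystem of $H$. Moreover $S\setminus\{s_\ell\}$ is, by construction, a lazy burning set for $H'$. By Theorem~\ref{SmallestSubsystemBurns} applied inside $H'$, the fire spreads to exactly $\langle S\setminus\{s_\ell\}\rangle$, which is all of $H'$. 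This gives $b_L(H')\leq \ell-1$.

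The remaining step is the reverse inequality $b_L(H')\geq \ell-1$. I would argue by contradiction. Suppose $T\subseteq V(H')$ is a lazy burning set for $H'$ with $|T|\leq \ell-2$. Then $\langle T\rangle=H'\supseteq S\setminus\{s_\ell\}$. It follows that $\langle T\cup\{s_\ell\}\rangle$ contains both $S\setminus\{s_\ell\}$ and $s_\ell$, and therefore contains $\langle S\rangle=H$. This uses the closure properties of generated subsystems: if $A\subseteq\langle B\rangle$, then $\langle A\rangle\subseteq \langle B\rangle$, which follows directly from Definition~\ref{def:generated subsystem} as an intersection of subsystems. So $T\cup\{s_\ell\}$ is a lazy burning set for $H$ of size at most $\ell-1$, contradicting $b_L(H)=\ell$. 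Hence $b_L(H')=\ell-1$.

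There is one degenerate point to check. We need $H'$ to be a genuine Steiner triple system on which lazy burning makes sense, and we need $\ell-1\geq 2$ so that the statement is meaningful. Since $\ell\geq 3$, the set $S\setminus\{s_\ell\}$ has at least two points, so $H'$ has at least $3$ points and is a nontrivial STS; the single-point or empty cases do not arise. I expect no real obstacle. The only step needing care is the closure/monotonicity property of $\langle\cdot\rangle$ used in the contradiction, namely $\langle T\cup\{s_\ell\}\rangle\supseteq\langle T\rangle\cup\{s_\ell\}$ together with idempotence, and this is immediate from the intersection characterization.
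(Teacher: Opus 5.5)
Your proof is correct and follows essentially the same route as the paper: take a minimum lazy burning set $S$, delete one point, let $H'$ be the subsystem generated by the remaining points, note $b_L(H')\le |S|-1$, and rule out anything smaller because adding the deleted point to a smaller lazy burning set for $H'$ would give a lazy burning set for $H$ of size less than $b_L(H)$. Your explicit appeal to monotonicity and idempotence of $\langle\cdot\rangle$ simply spells out a step the paper leaves implicit.
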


\begin{proof}
Let $S$ be a minimum lazy burning set for $H$,
fix a point $x \in S$, and let $S_x = S \setminus \{x\}$ and $H'=\langle S_x \rangle$.
If $x\in H'$, then $H' = H$, which would contradict the minimality of $S$, so $x \notin H'$. 
Clearly, $b_L(H') \leq |S_x|$.  If it were the case that $b_L(H') < |S_x|$, then adding $x$ to a minimum lazy burning set for $H'$ would generate a lazy burning set for $H$ of size less than $b_L(H)$, which is a contradiction.  Hence $b_L(H') = |S_x| = |S| - 1 = b_L(H) -1.$
\end{proof}

\begin{corollary}
\label{corollary-subSTS}
If $H$ is a Steiner triple system and $b_L(H) \geq 3$, then $H$ contains at least $b_L(H)$ distinct proper subsystems $H'$ such that $b_L(H') = b_L(H) - 1$.
\end{corollary}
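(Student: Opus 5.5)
The plan is to reuse the construction from the proof of Theorem~\ref{theorem-subSTS} once for each point of a fixed minimum lazy burning set, and then show that the resulting subsystems are pairwise distinct. Let $\ell = b_L(H) \geq 3$ and let $S = \{x_1,\dots,x_\ell\}$ be a minimum lazy burning set for $H$. For each $i$, set $S_i = S \setminus \{x_i\}$ and $H_i = \langle S_i \rangle$. The proof of Theorem~\ref{theorem-subSTS} applies verbatim to any chosen point of $S$. It shows that each $H_i$ is a proper subsystem of $H$ with $b_L(H_i) = \ell - 1$, and that $x_i \notin H_i$. The last fact holds because otherwise $\langle S \rangle \subseteq H_i$, which would make $S_i$ a lazy burning set for $H$ of size $\ell-1$.

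It remains to show that $H_i \neq H_j$ for $i \neq j$. The key observation is that when $i \neq j$, the point $x_i$ lies in $S_j$, so $x_i \in H_j$. We already know $x_i \notin H_i$. Hence $H_i$ and $H_j$ have different point sets, and so they are distinct subsystems. This produces $\ell$ distinct proper subsystems, each with lazy burning number $\ell - 1$, which proves the corollary.

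There is no substantial obstacle. The only point needing care is that distinctness follows from the membership pattern of the $x_i$, namely $x_i \in H_j$ exactly when $i \neq j$, and not from any counting argument. The hypothesis $\ell \geq 3$ is used only so that Theorem~\ref{theorem-subSTS} applies and each $S_i$ is nonempty.
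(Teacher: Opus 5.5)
Your proposal is correct and follows the paper's proof essentially verbatim: both fix a minimum lazy burning set $S$, take $\langle S\setminus\{x\}\rangle$ for each $x\in S$ (with lazy burning number $b_L(H)-1$ by the argument of Theorem~\ref{theorem-subSTS}), and get distinctness from the fact that $x\in\langle S\setminus\{y\}\rangle$ for $y\neq x$ while $x\notin\langle S\setminus\{x\}\rangle$. You also spell out why $x\notin\langle S\setminus\{x\}\rangle$ (otherwise $S\setminus\{x\}$ would generate $H$), which the paper leaves implicit in the corollary's proof.
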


\begin{proof}
    Let $S$ be a minimum lazy burning set for $H$. For each $x\in S$, let $S_x=S\setminus\{x\}$, and consider the subsystem $\langle S_x\rangle$, which has lazy burning number $b_L(H)-1$ by the proof of Theorem~\ref{theorem-subSTS}. Indeed, for any two distinct vertices $x,y\in S$, $\langle S_x\rangle\neq \langle S_y\rangle$, since $y\in \langle S_x\rangle$, but $y\notin \langle S_y\rangle$. 
\end{proof}

\subsection{Spectrum of Existence}
The literature on subsystems and dimension of Steiner triple systems, combined with Theorem~\ref{thm:dim_lazy_equiv}, provides a wealth of results on the existence of Steiner triple systems with given lazy burning number.  Doyen~\cite{STS_with_no_subsystems} proved that for every order $v \equiv 1$ or $3\pmod{6}$, there is an STS$(v)$ with no nontrivial subsystems.  We thus have the following.
\begin{theorem}[\cite{STS_with_no_subsystems}] \label{LazyBurning3}
For every positive integer $v \geq 7$ with $v \equiv 1$ or $3\pmod{6}$, there exists an STS$(v)$ with lazy burning number $3$.
\end{theorem}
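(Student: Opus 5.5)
The plan is to combine Doyen's existence result with the characterization of lazy burning sets in terms of subsystems. Fix an admissible $v \geq 7$ and let $H=(V,\mathcal{B})$ be an STS$(v)$ with no nontrivial subsystems, as provided by~\cite{STS_with_no_subsystems}. Here \emph{nontrivial} means a subsystem other than $H$ itself, the empty system, the one-point systems, and the single triples (the STS$(3)$ subsystems). We then compute $b_L(H)$ directly by proving matching lower and upper bounds of $3$.

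For the upper bound, take three points $x,y,z$ that do not form a block; such points exist since $v\geq 7$. By Theorem~\ref{SmallestSubsystemBurns}, lazily burning $\{x,y,z\}$ burns exactly $\langle\{x,y,z\}\rangle$. This subsystem strictly contains the block through $x$ and $y$, so it is not trivial. Since $H$ has no nontrivial proper subsystems, $\langle\{x,y,z\}\rangle = H$, and hence $b_L(H)\leq 3$.

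For the lower bound, observe that any set of at most two points lies in a proper subsystem of $H$. The empty set and singletons are contained in a single triple. Two points $x,y$ lie in their block $\{x,y,w\}$, which is a proper subsystem because $v\geq 7>3$. By the earlier theorem characterizing lazy burning sets (a set is a lazy burning set if and only if no proper subsystem contains it), no set of size at most $2$ is a lazy burning set. Therefore $b_L(H)\geq 3$. Alternatively, one may argue directly: burning two points only propagates to the third point of their block and then stops.

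The only real subtlety is the interpretation of ``no nontrivial subsystems'' in Doyen's theorem, namely that the excluded subsystems are exactly those of orders $0,1,3$ and $v$. In particular, Doyen's systems have no STS$(7)$ or STS$(9)$ subsystems, and more generally no proper subsystem of order at least $7$. Given that reading, both bounds follow immediately, and Theorem~\ref{thm:dim_lazy_equiv} restates the conclusion as $\dim(H)=2$.
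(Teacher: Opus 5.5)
Your proposal is correct and takes the same route as the paper. The paper states this result immediately after citing Doyen's theorem on STS$(v)$ with no nontrivial subsystems, relying on the characterization that a set is a lazy burning set exactly when no proper subsystem contains it. You have simply written out the two bounds the paper leaves implicit: three non-collinear points generate all of $H$, while any two points lie in a block, which is a proper subsystem.
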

Most Steiner triple systems of small order are known to have lazy burning number $3$.
\begin{theorem}[\cite{Doyen1970, TeirlinckThesis}]
If $H$ is a Steiner triple system of order $v \leq 27$, then $b_L(H)>3$ if and only if $H$ is $\PG(3,2)$ (the projective STS$(15)$) or $\AG(3,3)$ (the affine STS$(27)$).  Moreover, if $v \in \{33,37\}$, then every STS$(v)$ has lazy burning number $3$.
\end{theorem}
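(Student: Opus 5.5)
The statement is a classification theorem drawn from the literature (Doyen; Teirlinck's thesis). Via Theorem~\ref{thm:dim_lazy_equiv} we have $b_L(H)>3$ if and only if $\dim(H)\geq 3$, so the plan is to translate the question into subsystem language and then run a finite check over small orders.

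\textbf{Translation.} By Theorem~\ref{SmallestSubsystemBurns}, $b_L(H)>3$ holds exactly when every $3$-subset of points lies in a proper subsystem. Two cases arise.

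If three points $x,y,z$ form a block, they span only the trivial STS$(3)$.

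If $x,y,z$ do not form a block, then $\langle\{x,y,z\}\rangle$ contains the $7$ points of the configuration they generate, so it has order at least $7$. A proper subsystem of an STS$(v)$ has order at most $(v-1)/2$. Hence every non-collinear triple must lie in a proper subsystem of order $w$ with $7\le w\le (v-1)/2$.

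\textbf{Elimination by counting.} For $v\le 13$ this is impossible, since $(v-1)/2<7$. So $b_L=3$ for $v\le 13$, with the small cases $v=7,9$ checked directly.

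For $15\le v\le 27$, the only allowed subsystem orders are $7$ and $9$ (and $13$ once $v\ge 27$). Fix a point $p$ and count the triples through $p$ that must be covered by subsystems through $p$. Each subsystem of order $w$ through $p$ covers only $\binom{w-1}{2}-\frac{w-1}{2}$ non-collinear pairs $\{y,z\}$ completing a triple with $p$. The total number of such pairs is $\binom{v-1}{2}-\frac{v-1}{2}$. Combined with intersection constraints (two distinct subsystems meet in a subsystem, so their intersection has size $0,1,3,7,\dots$), this forces a near-partition structure. For $v=15$, every triple is covered only if it lies in an STS$(7)$, and the requirement becomes that $H$ is generated by Fano planes in the projective-geometry fashion; this characterizes $\PG(3,2)$. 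Similarly, for $v=27$, covering by STS$(9)$'s characterizes $\AG(3,3)$, using the Hall-type characterization that an STS in which every triangle generates an STS$(9)$ is affine. For $v=19,21,25$, the counting together with the fact that every non-collinear triple generates a subsystem should show that the required subsystems cannot coexist. Failing that, one appeals to the known computer enumeration of STS$(19)$ and to results on subsystem structure.

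\textbf{Orders $33$ and $37$.} Here the allowed subsystem orders are $7,9,13,15$. One must show that some triple generates all of $H$. This is the step I expect to be the main obstacle, since it is not purely counting. The route I would take is Teirlinck's analysis: a dimension-$3$ STS with all triangles generating small subsystems must have these subsystems pairwise intersecting in lines or points with regular parameters, and such a structure only exists for the orders $2^n-1$ and $3^n$. Since $33$ and $37$ are of neither form, every STS$(v)$ of these orders has $b_L=3$.

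Realistically, the proof in the paper will cite~\cite{Doyen1970, TeirlinckThesis} for this classification, together with Theorem~\ref{thm:dim_lazy_equiv}.
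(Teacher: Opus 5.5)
Your opening translation is correct: via Theorem~\ref{thm:dim_lazy_equiv}, $b_L(H)>3$ holds exactly when every $3$-set lies in a proper subsystem. Your elimination of $v\le 13$ (a non-collinear triple generates at least $7$ points, while proper subsystems have order at most $(v-1)/2$) is also correct. Your closing remark is exactly what the paper does: it gives no argument of its own and simply cites the dimension results of Doyen and Teirlinck, read through $\dim(H)=b_L(H)-1$.

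The self-contained route you sketch, however, does not go through. The decisive error is at orders $33$ and $37$. Your claim that an STS with $b_L\ge 4$ can exist only at orders $2^n-1$ or $3^n$ is false. The paper itself records (Theorem~\ref{LazyBurning4}) an STS$(39)$ and an STS$(v)$ for almost every admissible $v\ge 45$ with $b_L\ge 4$. You have conflated this with the statement that projective and affine systems are the only known \emph{non-degenerate} examples; degenerate systems of dimension at least $3$ are plentiful. So ``$33$ and $37$ are of neither form'' proves nothing. What is actually needed is a case analysis showing that no STS$(33)$ or STS$(37)$ has every triangle inside a proper subsystem of order $7$, $9$, $13$ or $15$, and that is precisely the content of Teirlinck's thesis that has to be cited.

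The middle range is also unproved.
\begin{itemize}
\item For $v=19,21,25$ you offer only ``should show'', and the point-local count you describe yields no contradiction. Under the hypothesis, the triangle-generated subsystems through a point $p$ are Fano planes and STS$(9)$s, and any two of them meet in at most one block through $p$. They therefore form a linear space on the $(v-1)/2$ blocks through $p$, with lines of sizes $3$ and $4$. Such linear spaces exist on $9$, $10$ and $12$ points:
\begin{itemize}
\item $\AG(2,3)$ itself, on $9$ points;
\item $\AG(2,3)$ with one parallel class extended by a common point at infinity, on $10$ points;
\item $\AG(2,3)$ with three parallel classes extended and the three new points forming a line, on $12$ points.
\end{itemize}
A global argument is required. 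Among these orders, a complete computer enumeration exists only for order $19$.
\item For $v=27$ you assume every triangle generates an STS$(9)$. You yourself listed $7$ and $13$ as possible subsystem orders, and excluding mixtures of Fano planes, STS$(9)$s and STS$(13)$s is the real work.
\item Your ``Hall-type characterization'' is false as stated: non-affine Hall triple systems exist from order $81$ on. It happens to hold at order $27$.
\item The converse direction, $b_L(\PG(3,2))=b_L(\AG(3,3))=4$, is not addressed. It should be taken from Theorems~\ref{thm:proj_lazy_solved} and~\ref{thm:affi_lazy_solved}.
\end{itemize}
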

Nevertheless, Steiner triple systems with higher lazy burning number exist widely.

\begin{theorem}[\cite{TeirlinckThesis}]\label{LazyBurning4}
There exists an STS$(v)$ with lazy burning number at least $4$ for every $v \in \{15,27,31,39\}$ and for all admissible $v \geq 45$, except possibly if $v \in \{51,67,69,145\}$.
\end{theorem}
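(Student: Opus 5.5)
This result is attributed to Teirlinck's thesis, and via Theorem~\ref{thm:dim_lazy_equiv} it is equivalent to the existence of an STS$(v)$ of dimension at least $3$. So the plan is to translate it into the language of generated subsystems. By Theorem~\ref{SmallestSubsystemBurns}, $b_L(H)\ge 4$ means exactly that every set of three points of $H$ generates a proper subsystem. For three points $x,y,z$ not in a common block, the generated subsystem is at least an STS$(7)$ containing them, so it suffices to build systems in which every such triple spans a small proper subsystem.

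For the small sporadic orders I would use explicit systems. For $v=15$ and $v=31$, the projective systems $\PG(3,2)$ and $\PG(4,2)$ work, since any three points span a Fano subplane. For $v=27$, $\AG(3,3)$ works, since any three points lie in an affine plane $\AG(2,3)$ of order $9$. For $v=39$ I would use a direct construction, for instance $\AG(3,3)$ extended by a suitable doubling-type construction, or a computer-found example; I would expect to cite rather than derive this case.

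For the infinite family, the natural tool is a recursive construction that preserves the property ``every $3$ points lie in a proper subsystem.'' The candidate is a product or GDD-based construction. Take $H=\AG(2,3)\times$ (a suitable design), or more generally use a $3$-GDD or transversal design in which each group and each block is filled with an STS of dimension at least $3$, or with subsystems. Three points then either lie within one group or block copy, or project to three points of the master design. In the latter case they lie in a proper sub-structure, provided the master design itself has the property that every three points lie in a proper subdesign (for instance a projective or affine geometry over a larger base). Concretely, I would use the direct product $\AG(n,3)$ and Wilson-type PBD closure. I would show that the set of orders admitting an STS with $b_L\ge4$ is PBD-closed with respect to block sizes whose designs are themselves ``$3$-point-degenerate'' (for example affine and projective planes, where any three non-collinear points span the whole plane, so this must be handled carefully), and then invoke known PBD-closure results to cover all admissible $v\ge45$.

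The main obstacle is the closure step. An arbitrary PBD construction can make three points generate everything, so the input designs must be chosen so that three points from different blocks still lie in a proper subsystem. This typically forces using group-divisible constructions with at least two ``directions'' of subsystems, and it is exactly where exceptional orders like $51,67,69,145$ slip through. Given that these exceptions come from Teirlinck's thesis, in practice the proof here is a citation, and I would present it as such: the statement follows directly from \cite{TeirlinckThesis} together with Theorem~\ref{thm:dim_lazy_equiv}.
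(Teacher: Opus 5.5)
This matches the paper, which gives no proof of its own: it cites Teirlinck's thesis for STSs of dimension at least $3$ and converts this to $b_L\ge 4$ via Theorem~\ref{thm:dim_lazy_equiv}. Drop the constructive sketch rather than presenting it as support, because parts of it cannot work as stated: a proper subsystem of an STS$(39)$ has order at most $19$, so no construction can extend $\AG(3,3)$ to an STS$(39)$, and the PBD-closure step is only asserted, not proved.
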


The following result shows that, in fact, there exists an STS$(v)$ with lazy burning number exactly $4$ for each order $v$ given in Theorem~\ref{LazyBurning4}.

\begin{theorem}[\cite{Teirlinck1979}]
If there is an STS$(v)$ with lazy burning number $\ell$, then there is an STS$(v)$ with lazy burning number $\ell'$ for every $3 \leq \ell' \leq \ell$.
\end{theorem}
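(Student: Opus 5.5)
The plan is to argue by strong induction on $v$, using Theorem~\ref{theorem-subSTS} to locate a proper subsystem that can be \emph{replaced}, and an intermediate-value argument to hit every value $\ell'$ between $3$ and $\ell$. The base observation is that if $H=(V,\mathcal{B})$ is an STS$(v)$ containing a subsystem $(Z,\mathcal{C})$, and $(Z,\mathcal{C}')$ is any other STS on the same point set $Z$, then $K=(V,(\mathcal{B}\setminus\mathcal{C})\cup\mathcal{C}')$ is again an STS$(v)$. Indeed, pairs inside $Z$ are covered exactly once by $\mathcal{C}'$, and all other pairs are covered by the untouched blocks of $\mathcal{B}\setminus\mathcal{C}$, none of which contains two points of $Z$.

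The first key step concerns this setup. Let $S$ be a minimum lazy burning set of $H$, let $x\in S$, and let $Z$ be the point set of $\langle S\setminus\{x\}\rangle$; by Theorem~\ref{theorem-subSTS} this subsystem has lazy burning number $\ell-1$. I will show that in any replacement $K$ as above, $Z\cup\{x\}$ still generates $K$. By Theorem~\ref{SmallestSubsystemBurns} this amounts to checking that the fire spreads from $Z\cup\{x\}$ in $K$ exactly as in $H$. The only blocks that differ lie inside $Z$, which is already fully burned, so the propagation beyond $Z$ uses only blocks common to $H$ and $K$. Consequently, if the replacing system $(Z,\mathcal{C}')$ has lazy burning number $m$, then $b_L(K)\le m+1$: a lazy burning set of size $m$ for $(Z,\mathcal{C}')$, together with $x$, generates first $Z$ and then all of $V$.

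The second step applies the induction hypothesis to the smaller system $\langle S\setminus\{x\}\rangle$, whose order is $|Z|<v$ and whose lazy burning number is $\ell-1$. This gives, for every $3\le m\le \ell-1$, an STS on $Z$ with lazy burning number exactly $m$. Substituting each one yields a family $K_3,\dots,K_{\ell-1}$ of STS$(v)$. Together with $K_\ell:=H$, these satisfy $b_L(K_m)\le m+1$ and $b_L(K_\ell)=\ell$, and Doyen's Theorem~\ref{LazyBurning3} covers the value $3$ directly. To obtain every $\ell'$, I will descend one step at a time. Starting from a system with lazy burning number $t>3$, I perform a replacement that produces a system whose lazy burning number lies in $\{t-1,t\}$, but which is strictly smaller in a suitable potential, such as the lazy burning number of the chosen subsystem. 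Iterating this, the lazy burning number can never jump over a value on its way down to $3$.

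The main obstacle is the \emph{lower} bound needed to prevent such a jump. In a degenerate system, replacing a subsystem could in principle collapse $b_L$ by more than one, since a cleverly chosen generating set might not use $x$ or $Z$ at all. My first attempt will be to replace the subsystem on $Z$ by one with lazy burning number $\ell-2$ that contains the old $\langle S\setminus\{x,y\}\rangle$ as a subsystem, obtained inductively so that the nested chain of subsystems is preserved. I will then argue that a generating set of $K$ of size less than $\ell-1$ could be transported back to produce a generating set of $H$ of size less than $\ell$, which is a contradiction. If this transport argument fails, the fallback is to perform much smaller trades, such as Pasch switches localized inside $Z$, for which one can check directly that the generated subsystem of any set changes by at most one extra generator.
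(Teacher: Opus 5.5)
The paper gives no proof of this statement; it quotes it from \cite{Teirlinck1979}, so your argument has to stand on its own. As written it has a genuine gap: everything you actually prove is an \emph{upper} bound. Your replacement lemma and the estimate $b_L(K)\le b_L(Z,\mathcal{C}')+1$ are correct. But the bounds $b_L(K_m)\le m+1$, together with $b_L(K_\ell)=\ell$, say nothing about which values in $[3,\ell]$ occur; a priori every $K_m$ could have lazy burning number $3$.

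Your ``descend one step at a time'' step presupposes exactly the missing lemma: either a modification that lowers $b_L$ by at most one, or $b_L(K_m)\ge m+1$ for a suitable $\mathcal{C}'$. Subsystem replacement gives no such control. For example, replace the hyperplane $Z$ of $\PG(n,2)$ by an STS$(2^n-1)$ with no nontrivial subsystems. Let $\{u,w,u\oplus w\}$ be an old hyperplane line that is not a new block. The outside points $(a,1),(a\oplus u,1),(a\oplus w,1)$ burn $(u,0),(w,0),(u\oplus w,0)$ through retained blocks. These three points generate all of $Z$ in the new system, and then everything burns. So $b_L(K)=3<b_L(\mathcal{C}')+1$. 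More generally, Hilton's degenerate systems \cite{Hilton1977} show that a subsystem's lazy burning number gives no lower bound for the whole system's. That is why your ``transport'' step has no mechanism: a small generating set of $K$ may pass through blocks of $\mathcal{C}'$, and simulating them in $H$ can cost up to $b_L(Z,\mathcal{C})$ extra points, not one.

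Your Pasch fallback does have the right local property. Suppose $H$ and $H'$ differ by a Pasch switch on a $6$-set $P$, and $T$ generates $H'$. Stop the propagation in $H'$ at the first use of a switched block $\{p,q,r'\}$, and add $r'$ to $T$. In $H$ this burns $p,q,r'$, three points of $P$ not on an old Pasch block, and these generate $P$; after that, every block used in $H'$ is either common or lies inside $P$. Hence $|b_L(H)-b_L(H')|\le 1$.

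The intermediate-value argument then needs a chain of such switches from your system down to one with lazy burning number $3$, and you give no reason such a chain exists. It need not exist: $\AG(m,3)$ and all its subsystems (again affine) contain no Pasch configuration. So for $H=\AG(m,3)$ not a single switch is available, inside $Z$ or anywhere else.

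What is missing is a move that provably changes $b_L$ by at most one \emph{and} can always be iterated until the lazy burning number reaches $3$. Supplying that is the real content of Teirlinck's theorem, and your proposal does not provide it.
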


Hilton and Teirlinck~\cite{HiltonTeirlinck1980} noted that the dimension of projective and affine triple systems $\PG(n,2)$ and $\AG(n,3)$ equals $n$.
Zeitler~\cite{Zeitler1987} gave a formal proof in the projective case.

\begin{theorem}[\cite{Zeitler1987}]
\label{thm:proj_lazy_solved}
For $n\geq 1$,
$b_L(\PG(n,2))=n+1$.
\end{theorem}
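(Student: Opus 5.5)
We prove $b_L(\PG(n,2))=n+1$ by identifying $\PG(n,2)$ with the nonzero vectors of $\mathbb{F}_2^{n+1}$, whose triples are the sets $\{x,y,x+y\}$ with $x\neq y$. By Theorem~\ref{SmallestSubsystemBurns}, lazy burning a set $S$ burns exactly $\langle S\rangle$. The proof therefore reduces to identifying the subsystems of $\PG(n,2)$ and computing $\langle S\rangle$ in these terms.

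\emph{Key step: subsystems are subspaces.} A set $X$ of nonzero vectors is the point set of a subsystem exactly when it is closed under the operation $(x,y)\mapsto x+y$ for distinct $x,y\in X$. Since $x+x=0$ is excluded, this says precisely that $X\cup\{0\}$ is closed under addition. Over $\mathbb{F}_2$, closure under addition is the same as being a linear subspace. Hence the subsystems of $\PG(n,2)$ correspond exactly to the subspaces $W\leq\mathbb{F}_2^{n+1}$, via $W\mapsto W\setminus\{0\}$. It follows that $\langle S\rangle=\operatorname{span}(S)\setminus\{0\}$, because the span is the smallest subspace containing $S$.

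\emph{Upper bound.} Take $S$ to be a basis $e_1,\dots,e_{n+1}$ of $\mathbb{F}_2^{n+1}$. Its span is the whole space, so $\langle S\rangle$ is all of $\PG(n,2)$ and $S$ is a lazy burning set. This gives $b_L\leq n+1$.

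\emph{Lower bound.} If $|S|\leq n$, then $\dim\operatorname{span}(S)\leq n<n+1$. So $\langle S\rangle$ is a proper subsystem and not every point burns, giving $b_L\geq n+1$.

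This also matches Theorem~\ref{thm:dim_lazy_equiv}, since it gives $\dim(\PG(n,2))=n$. The only step needing care is the subsystem$=$subspace correspondence, specifically the check that $0$ is harmless and that the case $x=y$ is excluded. For the boundary case $n=1$, $\PG(1,2)$ is a single triple, for which $b_L=2$ directly. It is also worth noting that Algorithm~\ref{lazy_alg} attains this value: each new source chosen outside the current span raises the dimension by exactly one.
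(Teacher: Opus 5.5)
Your proof is correct, and your lower bound is the paper's. Both note that $\langle S\rangle$ is the nonzero part of a subspace of dimension at most $|S|$, so $n$ or fewer sources cannot burn all $2^{n+1}-1$ points. You also justify the subsystem--subspace correspondence, which the paper only asserts.

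The routes differ on the upper bound. The paper inducts on $n$: it takes a lazy burning set of size at most $n$ for the maximal subsystem $\PG(n-1,2)$ on $(\mathbb{F}_2^{n}\setminus\{\mathbf{0}_n\})\times\{0\}$ and adds the single point $\mathbf{1}_{n+1}$. That point ignites every point with last coordinate $1$ through the triples $\{(x,0),\,(x,0)\oplus\mathbf{1}_{n+1},\,\mathbf{1}_{n+1}\}$. You instead prove the identity $\langle S\rangle=\operatorname{span}(S)\setminus\{\mathbf{0}_{n+1}\}$ once and read off both bounds, using a basis for the upper bound.

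Your version is more uniform and gives more. It characterizes the lazy burning sets of $\PG(n,2)$ as exactly the spanning sets, so the minimum ones are the bases. This also explains why Algorithm~\ref{lazy_alg} is optimal on projective systems.

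The paper's ``maximal subsystem plus one new point'' induction has its own payoff: it carries over to round-based burning. The same step, with $e_{n+1}$ as the extra source, yields $b(\PG(n,2))\le n+2$ in Theorem~\ref{cor:proj_regular_burning_solved}. A static span argument does not by itself control how many rounds the fire needs.
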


In Section~\ref{sec:Affine and Projective Geometries} we will give a proof of the lazy burning number of affine as well as projective triple systems in the language of lazy burning.

An upper bound on the lazy burning number follows as an immediate consequence of Proposition~3.1 in~\cite{NagySzemeredi}, which was concerned with bounding the size of minimal spreading sets in Steiner triple systems.  An independent proof of this theorem using the language of lazy burning can be found in~\cite{mythesis}.
\begin{theorem}[\cite{mythesis,NagySzemeredi}]
\label{lazy_upper_bound_sts}
If $H$ is an STS$(v)$, then $b_L(H)\leq \lfloor \log_2(v+1)\rfloor$.
\end{theorem}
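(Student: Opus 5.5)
The plan is to run Algorithm~\ref{lazy_alg} on $H$, choosing each new source to lie outside the subsystem that is currently burned, and to show that the burned subsystem at least doubles in size (plus one) with every new source. By Theorem~\ref{SmallestSubsystemBurns}, after the fire stops spreading, the burned set is exactly the vertex set of a subsystem $\langle S\rangle$. So it suffices to build a chain of generating sets $S_1\subset S_2\subset\cdots\subset S_k$ with $|S_i|=i$ and to track $|\langle S_i\rangle|$.

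The key step is a growth estimate. Suppose $G=(Z,\mathcal{C})$ is a proper subsystem of $H$ with $|Z|=m$, and let $x\notin Z$. For each $z\in Z$, let $y_z$ be the third point of the block containing $x$ and $z$. Then:
\begin{itemize}
\item $y_z\notin Z$, since otherwise $x$ would be the third point of a block with two points of $Z$, forcing $x\in Z$ because $G$ is a subsystem.
\item The points $y_z$ are pairwise distinct, since each lies in a unique block with $x$.
\item Each $y_z\neq x$.
\end{itemize}
Hence $\langle Z\cup\{x\}\rangle$ contains $Z$, $x$, and the $m$ distinct points $y_z$, so it has at least $2m+1$ points.

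Now iterate, starting from $S_1=\{u_1\}$, where $\langle S_1\rangle$ has $1=2^1-1$ points. While $\langle S_i\rangle\neq H$, pick $u_{i+1}$ outside it and set $S_{i+1}=S_i\cup\{u_{i+1}\}$. Using $\langle S_{i+1}\rangle=\langle \langle S_i\rangle\cup\{u_{i+1}\}\rangle$ and the estimate above, induction gives $|\langle S_i\rangle|\geq 2^i-1$. The process stops at some $k$ with $\langle S_k\rangle=H$, and at that point $v\geq 2^k-1$. Therefore
\[
b_L(H)\leq k\leq \log_2(v+1),
\]
and since $k$ is an integer, $k\leq\lfloor\log_2(v+1)\rfloor$.

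The only delicate point is the claim that $y_z\notin Z$; it relies on $Z$ being closed (a subsystem), which is exactly what Theorem~\ref{SmallestSubsystemBurns} provides. No other obstacle is expected. The argument mirrors the proof of Theorem~\ref{upper_bound_sts}, except that full closure is available here at every step, so the bound improves from $2^{i-1}$ to $2^i-1$.
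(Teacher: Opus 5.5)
Your proof is correct. The paper gives no proof of its own for this statement. It derives the bound from Proposition~3.1 of Nagy and Szemer\'edi, which bounds the size of \emph{minimal} spreading sets, and cites the thesis for a proof in lazy-burning language.

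Your argument is the standard doubling argument: adjoining a point outside a subsystem of order $m$ generates a subsystem of order at least $2m+1$. Each step checks out:
\begin{itemize}
\item $y_z\notin Z$ follows from closure of $Z$; for $m=1$ it holds trivially, since a block has three distinct points.
\item The $y_z$ are pairwise distinct by linearity.
\item The induction step $|\langle S_{i+1}\rangle|\ge 2(2^i-1)+1=2^{i+1}-1$ is right.
\end{itemize}

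Your argument also yields the stronger, minimal-set form of the cited result. Let $S=\{s_1,\dots,s_k\}$ be any minimal lazy burning set. Then $s_{i+1}\notin\langle s_1,\dots,s_i\rangle$ for every $i$, since otherwise $S\setminus\{s_{i+1}\}$ would still generate $H$. So any ordering of $S$ is a legal run of your process, and hence $|S|\le\lfloor\log_2(v+1)\rfloor$. In the same way, every output of Algorithm~\ref{lazy_alg} satisfies the bound, whatever valid sources are chosen, because each new source lies outside the subsystem already burned.
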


Theorem~\ref{lazy_upper_bound_sts} shows that a Steiner triple system with lazy burning number $\ell$ must have order at least $\Omega(2^{\ell})$. Moreover, since $\PG(\ell-1,2)$ has $2^{\ell}-1$ points, Theorem~\ref{lazy_upper_bound_sts}, in conjunction with Theorem~\ref{thm:proj_lazy_solved}, implies $\PG(\ell-1,2)$ is the smallest Steiner triple system which has lazy burning number $\ell$. Theorem~14 of~\cite{Hilton1977} shows that a Steiner triple system with given lazy burning number exists whenever the order is sufficiently large.

\begin{theorem}[\cite{Hilton1977}]
\label{thm:AJWH1977}
Let $\ell \geq 4$ and let 
\[
v_\ell = \left\{ \begin{array}{ll} 
2^{2\ell+2} - 3 \cdot 2^{\ell+1} -1, & \mbox{if $\ell$ is even,} \\
2^{2\ell+2}-5\cdot 2^{\ell+1}+3, & \mbox{if $\ell$ is odd.}
\end{array}\right.
\]
If $v>v_\ell$ is admissible, then there exists an STS$(v)$ with lazy burning number $\ell$.
\end{theorem}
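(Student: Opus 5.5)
This is a cited result of Hilton (Theorem~14 of~\cite{Hilton1977}), so the plan is to reconstruct its proof in the language of lazy burning. The idea is to start from a small system with lazy burning number exactly $\ell$ and embed it in a large system so that the lazy burning number is neither increased nor decreased.

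The natural seed is the projective system $\PG(\ell-1,2)$. It has $2^{\ell}-1$ points, and $b_L(\PG(\ell-1,2))=\ell$ by Theorem~\ref{thm:proj_lazy_solved}. The construction of the host $H$ of order $v$ proceeds in three steps.

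\textbf{Lower bound, $b_L(H)\geq \ell$.} We need every set of $\ell-1$ points of $H$ to lie in a proper subsystem. The plan is to give $H$ a recursive ``doubling-type'' structure, $v=2w+1$ or $v=3w$ style, in which $H$ is built over a quotient that is itself a projective or affine geometry of dimension $\ell-1$. Then any $\ell-1$ points project to a set lying in a hyperplane of the quotient, and its preimage is a proper subsystem containing them. Concretely, I would take a \emph{$\PG(\ell-1,2)$-indexed} construction: points are $\{\infty\}\cup (P\times W)$, where $P$ is the point set of $\PG(\ell-1,2)$. Blocks inside a fibre $\{p\}\times W$ come from an STS on $W\cup\{\infty\}$. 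Blocks over each projective line $\{p,q,r\}$ come from a Latin square, or from $1$-factorizations, arranged so that the preimage of every projective subspace is a subsystem.

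\textbf{Upper bound, $b_L(H)\leq \ell$.} We need $\ell$ points generating $H$. I would choose one point in each of $\ell$ fibres over a projective basis and arrange the fibre systems and Latin squares generically, for example using idempotent quasigroups with no subquasigroups or Doyen-type systems with no nontrivial subsystems (Theorem~\ref{LazyBurning3}). The closure of these $\ell$ points then covers every fibre: once two points in a fibre burn, the subsystem-free fibre design forces the whole fibre to burn.

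\textbf{Filling all admissible $v>v_\ell$.} The structured construction only gives orders of special forms, so to reach every admissible $v$ I would use an embedding step. The core system of order about $2^{2\ell}$ would be embedded into an STS$(v)$ in a way that adjoins points generated by the core, via a Doyle--Wilson style embedding of an STS$(u)$ into an STS$(v)$ for all admissible $v\geq 2u+1$. The lower bound must survive this, by again splitting along the hyperplane structure, and the upper bound must also survive, by arranging that the added part is generated once the core burns. The threshold $v_\ell\approx 2^{2\ell+2}$ with its parity-dependent correction presumably comes from requiring $v\geq 2u+1$ where $u$ is the core order, $u\approx 2^{2\ell+1}$. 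The two cases for $\ell$ even and odd then reflect which congruence class mod $6$ the core order lands in.

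The main obstacle is preserving $b_L\geq\ell$ through the final embedding. A generic embedding can create new triples that let fewer points generate everything, for instance $\ell-1$ points chosen partly outside the core. I would handle this by making the embedding itself ``graded'': the extension is chosen so that its new points are indexed compatibly with the projective quotient, and every hyperplane preimage extends to a proper subsystem. If that fails, I would fall back on Hilton's explicit direct-product-with-$\PG$ constructions, which cover the congruence classes separately.
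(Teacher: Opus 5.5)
The paper does not prove this statement; it only cites Hilton's Theorem~14. So your sketch has to stand on its own. It does not: there is a genuine gap, and the decisive step, reaching every admissible $v>v_\ell$, is both left undone and, as proposed, cannot work.

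\textbf{The embedding step.} In a Doyen--Wilson embedding the core $K$ is a subsystem of the final system $H$. Any block of $H$ containing two points of $K$ therefore lies in $K$, and by Theorem~\ref{SmallestSubsystemBurns}, burning any $S\subseteq K$ burns only $\langle S\rangle\subseteq K$. Hence ``arranging that the added part is generated once the core burns'' is impossible. An $\ell$-point generating set of $H$ must use points outside $K$, and $b_L(K)=\ell$ says nothing about $b_L(H)\le\ell$.

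The lower bound $b_L(H)\ge\ell$ is not inherited either: by Zeitler's result quoted in the paper, every STS$(u)$ embeds in an STS$(2u+1)$ with lazy burning number $3$. Your remedy (a ``graded'' embedding, or else ``fall back on Hilton'') simply restates the theorem.

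What is needed is a construction in which all of $H$, not just a core, lies over the projective quotient. For instance, replace $\infty$ by a common STS$(t)$ on a set $T$. Let each $(\{p\}\times W)\cup T$ carry an STS$(w+t)$ containing it as a subsystem (so $w\ge t+1$), and keep Latin squares over the lines of $P$. Then $T\cup(S\times W)$ is a subsystem for every subspace $S$ of $P$, so $b_L(H)\ge\ell$ is automatic for orders $(2^\ell-1)w+t$. The arithmetic of which admissible $v$ can be reached, possibly over several base systems with lazy burning number $\ell$, is where a threshold like $v_\ell$ arises.

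Your guess that $v_\ell\approx 2u+1$ with $u$ an admissible core order cannot be right for odd $\ell$. There $v_\ell\equiv 5\pmod 6$, so $(v_\ell-1)/2$ is not admissible. For even $\ell$, by contrast, $v_\ell=m(4m-2)+(4m-3)$ with $m=2^\ell-1$, which is a fibred order with $w=t+1$.

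\textbf{The upper bound inside the core.} This is not established either. Two points of an STS generate only the block through them; blocks are order-$3$ subsystems, also in Doyen's systems. So two burned points in a fibre do not burn the fibre. You need three non-collinear points of some fibre system on $W\cup\{\infty\}$ in the closure. After that, since the closure meets every fibre (its projection is a subspace containing a basis), the Latin squares do spread the fire everywhere.

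Whether the closure of one point over each basis point ever yields such a triple depends on the ingredients, and you do not argue it. Idempotence actively works against you. An idempotent quasigroup has every singleton as a subquasigroup, so ``idempotent with no subquasigroups'' is contradictory. Moreover, if all line squares are idempotent, then $\{(p,a):p\in P\}$ is a subsystem isomorphic to $\PG(\ell-1,2)$, so sources $(p_1,a),\ldots,(p_\ell,a)$ burn only that copy.

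Your lower-bound argument for the core is fine, and it needs no special arrangement. For every subspace $S$, $\{\infty\}\cup(S\times W)$ is a subsystem whatever Latin squares are used.
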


\subsection{The Virtue of Laziness}
A natural question is to compare the lazy burning number to the burning number of a Steiner triple system.  When restricted specifically to Steiner triple systems, Corollary 2.7 of \cite{introducing_hyp_burn} implies that laziness is beneficial in terms of the number of sources required to burn the system, which we formalize in the following theorem. 

\begin{theorem} [\cite{introducing_hyp_burn}]
\label{cor:lazyburningUpperbound}
If $H$ is a Steiner triple system, then $b_L(H) < b(H)$.
\end{theorem}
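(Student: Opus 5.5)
The plan is to turn an optimal burning sequence into a lazy burning set of strictly smaller size. Let $(u_1,\ldots,u_k)$ be a burning sequence for $H$ with $k=b(H)$. First, the set $S=\{u_1,\ldots,u_k\}$ is a lazy burning set. Round-based burning only ever adds the sources in some order, and propagation happens under the same rule in both processes. So by induction on $\round$, every vertex in $F(\round)$ lies in the subsystem $\langle S\rangle$. If all sources are lit at time zero, Theorem~\ref{SmallestSubsystemBurns} says the fire spreads to exactly $\langle S\rangle$. Since the round process burns everything, $\langle S\rangle=H$, and hence $b_L(H)\le k$.

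To get strict inequality, I will remove one source. I look at the last source $u_k$. By the time it is chosen, some vertices have burned by propagation from earlier sources. I will argue that $S'=\{u_1,\ldots,u_{k-1}\}$, or a modification of it, already generates $H$.

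The cleanest route is to look at the first round. Consider $S''=\{u_2,\ldots,u_k\}$ together with the observation that $u_1,u_2$ lie in a triple $\{u_1,u_2,a\}$. In round $3$, the vertex $a$ burns by propagation, so it is burned without being chosen. Since $k\ge 3$ (no STS is burned in two rounds, as shown by $f(2)=2<v$ in Lemma~\ref{first4rounds}, or trivially for $v=3$), I then replace the pair of sources $u_1,u_2$ by the pair $\{u_1,u_2\}$ and drop one later source. The key point is that the set $\{u_1,\ldots,u_k\}$ contains $u_1,u_2$, and that $\langle\{u_1,u_2\}\rangle$ already contains $a$.

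Concretely, I need a vertex among the sources that lies in $\langle$other sources$\rangle$. The main obstacle is that no source is redundant in an optimal sequence, so no individual $u_i$ need be generated by the earlier ones. To handle this, I will use the timing slack instead. In the round process, $u_\round$ is lit only at round $\round$, while in lazy burning everything is lit at time $0$. The lazy process therefore runs "ahead". I will show $\langle u_1,\ldots,u_{k-1}\rangle = H$ by contradiction. Suppose $K=\langle u_1,\ldots,u_{k-1}\rangle$ is a proper subsystem. Then $F(k-1)\subseteq K$. Since $H$ is fully burned at round $k$, every vertex outside $K\cup\{u_k\}$ burns at round $k$ from a triple with two points in $F(k-1)\subseteq K$. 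That is impossible for a subsystem, because two points of $K$ force the third into $K$. So $V\setminus K\subseteq\{u_k\}$, which means $|V\setminus K|\le 1$. A proper subsystem of an STS$(v)$ has at most $(v-1)/2$ points, so $v-1\le (v-1)/2$, giving $v\le 1$, a contradiction. Hence $K=H$, and therefore $b_L(H)\le k-1<b(H)$.

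The step to check carefully is that $F(k-1)\subseteq K$. This follows from the same induction as before: propagation from points of a subsystem stays inside that subsystem, and all sources up to round $k-1$ lie in $K$.
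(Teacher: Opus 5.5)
Your final argument, the contradiction with $K=\langle u_1,\ldots,u_{k-1}\rangle$, is correct. The paper gives no proof of this statement; it imports it from Corollary~2.7 of \cite{introducing_hyp_burn}, a result about general hypergraphs. What you have is therefore a self-contained proof in the subsystem language of Section~\ref{sec:LazyBurning}:
\begin{itemize}
\item closure of $K$ under completing triples gives $F(k-1)\subseteq K$;
\item every vertex that catches fire by propagation in round $k$ has both witnesses in $F(k-1)$, so it lies in $K$;
\item hence $V\setminus K\subseteq\{u_k\}$, which no proper subsystem can satisfy.
\end{itemize}

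Your appeal to the bound $|K|\le (v-1)/2$ is valid but heavier than needed. If $V\setminus K=\{u_k\}$, any triple through $u_k$ has its other two points in $K$, which forces $u_k\in K$.

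Written that way, your proof is the subsystem phrasing of the hypergraph-level ``drop the last source'' argument. Lazily burning $\{u_1,\ldots,u_{k-1}\}$ burns at least $F(k-1)$, and hence every vertex that propagates in round $k$, so all of $V\setminus\{u_k\}$. Then $u_k$ ignites through any triple containing it. That version needs neither Theorem~\ref{SmallestSubsystemBurns} nor a subsystem size bound, and it applies to any hypergraph in which every vertex lies in a non-singleton edge. Yours ties the inequality to the generated-subsystem viewpoint but gains nothing further for Steiner triple systems.

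\textbf{Cleanups.}
\begin{itemize}
\item Delete the middle paragraph, where you replace $u_1,u_2$ by $\{u_1,u_2\}$ and drop a later source. It is incoherent and the final argument never uses it.
\item Delete the opening observation that $\{u_1,\ldots,u_k\}$ is a lazy burning set; the later argument supersedes it.
\item State the hypothesis $v\ge 3$ explicitly. Your contradiction $v\le 1$ is a contradiction only for nontrivial systems. For the trivial STS$(1)$ one has $b=b_L=1$, so the strict inequality genuinely needs this hypothesis. The paper itself acknowledges this in its later remark, where the consequence of the cited corollary is stated for orders $v\ge 3$.
\end{itemize}
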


Whereas Theorem~\ref{thm:proj_lazy_solved} gives the lazy burning number of $\PG(n,2)$, in Theorem~\ref{cor:proj_regular_burning_solved} we will determine the burning number of $\PG(n,2)$; these results together show that the projective triple systems provide an infinite family of triple systems whose burning number is exactly one more than the lazy burning number.
We note, however, that the difference between $b(H)$ and $b_L(H)$ can be arbitrarily large for a Steiner triple system $H$.  Theorem~\ref{small_lem} implies that as the order of a Steiner triple system grows, so too does its burning number. On the other hand, Theorem~\ref{LazyBurning3} states that Steiner triple systems with lazy burning number $3$ exist for all admissible orders.  We therefore have the following.

\begin{theorem}
    For each $k \in \mathbb{N}$, there is a Steiner triple system $H$ such that $b(H)-b_L(H) \geq k$.  
\end{theorem}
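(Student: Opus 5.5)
The plan is to combine Theorem~\ref{LazyBurning3} with the lower bound on the burning number coming from Theorem~\ref{small_lem}, or equivalently from Corollary~\ref{cor:lowerbound_burning_logs}. Fix $k \in \mathbb{N}$. We want an admissible order $v$ large enough that every STS$(v)$ has burning number at least $k+3$. The STS$(v)$ supplied by Doyen's result then has lazy burning number exactly $3$, and the difference is at least $k$.

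First we choose $v$. Since $h$ is strictly increasing for $\round \geq 2$ (visible from Table~\ref{h_g_table} and the recursion (\ref{defn_of_h})), it suffices to take an admissible $v$ with $v > h(k+2)$. For instance, any $v \equiv 1 \pmod 6$ with $v > h(k+2)$ works, and such $v$ certainly satisfies $v \geq 7$. Let $\round_v$ be the least natural number with $v \leq h(\round_v)$. Then $h(\round) \leq h(k+2) < v$ for all $\round \leq k+2$, so $\round_v \geq k+3$.

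Next we apply Theorem~\ref{LazyBurning3} to obtain an STS$(v)$, say $H$, with $b_L(H)=3$. By Theorem~\ref{small_lem}, $b(H) \geq \round_v \geq k+3$. Therefore $b(H)-b_L(H) \geq k$.

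The only point needing care is the monotonicity of $h$, which is what guarantees $h(\round) < v$ for every $\round \leq k+2$ rather than only at $\round = k+2$. We can avoid it entirely. Lemma~\ref{big_lem} says that at most $h(\round)$ vertices are burned at round $\round$, so we simply choose $v > \max_{\round \leq k+2} h(\round)$. Then no burning sequence of length at most $k+2$ can burn all $v$ points, which gives $b(H) \geq k+3$ directly. Alternatively, we can invoke Corollary~\ref{cor:lowerbound_burning_logs} with $v$ large enough that $\lfloor \log_2(\log_\beta(v/2))\rfloor \geq k+2$.
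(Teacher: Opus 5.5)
Your proposal is correct and is the paper's own argument: it too combines Theorem~\ref{LazyBurning3}, which gives lazy burning number $3$ at every admissible order, with Theorem~\ref{small_lem}, which forces the burning number to grow with the order. You go further than the paper only by making the choice of order explicit (an admissible $v > h(k+2)$, giving $b(H) \geq k+3$), which the paper leaves implicit.
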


\section{Projective and Affine triple systems}
\label{sec:AffineProjectiveA}
\label{sec:subsystems}

In this section, we consider burning and lazy burning of affine and projective planes. 
As discussed in Section~\ref{sec:LazyBurning}, lazy burning of these designs has been previously considered in the context of dimension.  
Theorem~\ref{thm:affi_lazy_solved} gives the lazy burning number for affine planes; this result was previously mentioned by 
Hilton and Teirlinck~\cite{HiltonTeirlinck1980}, without proof. 
Theorem~\ref{thm:proj_lazy_solved}  gives the lazy burning number of projective planes, which was proved in the context of dimension by Zeitler~\cite{Zeitler1987}; we give an alternate proof in terms of lazy burning. As far as we know, the corresponding results for round-based burning are new.

In the following, given a prime power $q$, we let $\mathbb{F}_q$ denote the finite field of order $q$. 
Addition of vectors in $\mathbb{F}_q^n$ is defined in the standard way: if ${x}=(x_1,\ldots,x_n)$ and ${y}=(y_1,\ldots,y_n)$, then ${x}\oplus {y}=(x_1+y_1,\ldots,x_n+y_n)$ where each $x_i+y_i$ is addition in $\mathbb{F}_q$. 
The zero element will be denoted $\mathbf{0}_n$, the all-ones vector $(1,1\ldots,1)$ is denoted $\mathbf{1}_{n}$, and ${e}_i$ is the standard basis vector with $1$ in the $i^{\rm th}$ coordinate and $0$ elsewhere. 

\begin{definition}
    The \emph{projective Steiner triple system} $\PG(n,2)$ has vertex set $\mathbb{F}_2^{n+1}\setminus\{\mathbf{0}_{n+1}\}$ and contains the triple $\{x,y,x\oplus y\}$ for each distinct pair $x,y\in \mathbb{F}_2^{n+1}\setminus\{\mathbf{0}_{n+1}\}$. 
\end{definition}
The projective Steiner triple system $\PG(n,2)$ is a Steiner triple system on $2^{n+1}-1$ vertices. 
Note that $\PG(n,2)$ contains $\PG(n-1,2)$ as a (maximal) subsystem with point set $(\mathbb{F}_2^{n}\setminus\{\mathbf{0}_n\}) \times\{0\}$.

\begin{definition}
    The \emph{affine Steiner triple system} $\AG(m,3)$ has vertex set $\mathbb{F}_3^{m}$ and contains the triple $\{x,y,z\}$ whenever $x\oplus y\oplus z = \mathbf{0}_m$ for distinct $x,y,z\in \mathbb{F}_3^{m}$. 
\end{definition}

The affine Steiner triple system $\AG(m,3)$ is a Steiner triple system on $3^{m}$ vertices. Note that all maximal proper subsystems of $\AG(m,3)$ are isomorphic to $\AG(m-1,3)$.

\label{sec:Affine and Projective Geometries}

We now provide an alternate proof of Theorem~\ref{thm:proj_lazy_solved}
(originally established by Zeitler~\cite{Zeitler1987}) using the language of lazy burning.

{
\renewcommand{\thetheorem}{\ref{thm:proj_lazy_solved}}
\begin{theorem}
For $n\geq 1$,
$b_L(\PG(n,2))=n+1$.
\end{theorem}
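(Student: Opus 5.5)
We prove the upper and lower bounds separately. Both rest on one identification: in $\PG(n,2)$, closing a set under propagation is the same as closing it under vector addition.

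For the upper bound $b_L(\PG(n,2))\le n+1$, we burn the standard basis vectors $S=\{e_1,\ldots,e_{n+1}\}$ of $\mathbb{F}_2^{n+1}$. Every nonzero vector is a sum $e_{i_1}\oplus\cdots\oplus e_{i_k}$ of distinct basis vectors with $k\ge 1$. We show by induction on $k$ that every such vector burns. For $k=1$ this is immediate. For $k\ge 2$, put $x=e_{i_1}\oplus\cdots\oplus e_{i_{k-1}}$ and $y=e_{i_k}$. These are distinct and nonzero, and both burn by the inductive hypothesis. Since $\{x,y,x\oplus y\}$ is a triple, $x\oplus y$ burns by propagation. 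So $S$ is a lazy burning set. Alternatively, this bound follows at once from Theorem~\ref{lazy_upper_bound_sts}, since $\lfloor\log_2(2^{n+1})\rfloor=n+1$.

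For the lower bound, take any set $S$ of at most $n$ points. Let $W$ be the $\mathbb{F}_2$-linear span of $S$, so $\dim W\le n<n+1$. The set $W\setminus\{\mathbf{0}_{n+1}\}$ is the point set of a subsystem: if $x,y\in W$ are distinct and nonzero, then $x\oplus y\in W$ and $x\oplus y\neq\mathbf{0}_{n+1}$, so every triple meeting this set in two points lies inside it. Because $|W\setminus\{\mathbf{0}_{n+1}\}|\le 2^n-1<2^{n+1}-1$, this subsystem is proper and contains $S$. By the characterization of lazy burning sets (no proper subsystem may contain $S$), $S$ is not a lazy burning set. Equivalently, Theorem~\ref{SmallestSubsystemBurns} shows that only $\langle S\rangle\subseteq W\setminus\{\mathbf{0}_{n+1}\}$ burns. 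Hence $b_L(\PG(n,2))\ge n+1$.

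There is no real obstacle here. The only step needing care is checking that nonzero points of a linear subspace form a subsystem. This requires that propagation never produces $\mathbf{0}_{n+1}$, which holds because a triple $\{x,y,x\oplus y\}$ always has $x\ne y$, so $x\oplus y\neq\mathbf{0}_{n+1}$.
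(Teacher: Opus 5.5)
Your proof is correct, and the lower bound is the paper's argument: the burned set never leaves the nonzero part of the $\mathbb{F}_2$-span of $S$, which has at most $2^{|S|}-1$ points. Your write-up is a little more careful than the paper's. The paper calls $\langle S\rangle$ itself a linear subspace and compares against $2^{n+1}$ rather than $2^{n+1}-1$. You instead pass explicitly to $W\setminus\{\mathbf{0}_{n+1}\}$, check that $\mathbf{0}_{n+1}$ is never produced, and compare $2^n-1$ with $2^{n+1}-1$.

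The upper bound is where you diverge. The paper inducts on $n$: a lazy burning set of size $n$ for the hyperplane subsystem $\PG(n-1,2)$ on $(\mathbb{F}_2^{n}\setminus\{\mathbf{0}_n\})\times\{0\}$, plus the single outside point $\mathbf{1}_{n+1}$, burns everything. This works because every other point with last coordinate $1$ is $x\oplus\mathbf{1}_{n+1}$ for some burned $x$ in the hyperplane. You instead burn the standard basis outright and induct on the number of summands.

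Your version is self-contained and shows directly that any basis is a lazy burning set, in the same spirit as the paper's treatment of $\AG(m,3)$ in Theorem~\ref{thm:affi_lazy_solved}. The paper's version emphasizes the nesting $\PG(n-1,2)\subset\PG(n,2)$, which it reuses in Theorem~\ref{cor:proj_regular_burning_solved} to build burning sequences. Unrolling the paper's induction also yields a basis of $\mathbb{F}_2^{n+1}$, so the two upper-bound arguments differ in organization rather than substance.
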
\addtocounter{theorem}{-1}
}

\begin{proof}
We show the upper bound $b_L(\PG(n,2))\le n+1$ by induction.
For the base case,  $\PG(1,2)$ is a single triple and so has lazy burning number 2.
We now suppose that $b_L(\PG(n-1,2))\le n$ and let $S$ be a lazy burning set of size at most $n$ for $\PG(n-1,2)$ on vertex set $(\mathbb{F}^{n}_2\setminus\{\mathbf{0}_n\})\times\{0\}$. Thus $S\cup \mathbf{1}_{n+1}$ is a lazy burning set of size at most $n+1$ for $\PG(n,2)$. 

We now establish the lower bound $b_L(\PG(n,2))\ge n+1$. 
For any subset $S$ of the vertices in $\PG(n,2)$, the set $\langle S\rangle$ forms a linear subspace of $\mathbb{F}_2^{n+1}$  (recall that $\langle S \rangle$ is the set of points that become burned if the points of $S$ are burned), which contains at most $2^{|S|}$ elements. 
If $|S|<n+1$, then the number of vertices burning after applying the lazy burning process to $S$ is less than $2^{n+1}$, and so $S$ could not be a lazy burning set. Thus a lazy burning set must have size at least $n+1$. 
\end{proof}

\begin{corollary}
\label{arbitrary_lazy_num}
For all $n\in\mathbb{Z}^+\setminus\{1\}$, there exists a Steiner triple system with lazy burning number $n$.
\end{corollary}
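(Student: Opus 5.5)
The corollary is a direct consequence of Theorem~\ref{thm:proj_lazy_solved}. Given an integer $n\geq 2$, we set $m=n-1\geq 1$ and take $H=\PG(m,2)$. This is a Steiner triple system on $2^{m+1}-1=2^n-1$ points; the order is admissible, since $2^n-1\equiv 1,3 \pmod 6$ for $n\geq 2$, and in any case $\PG(m,2)$ is an STS by construction. Theorem~\ref{thm:proj_lazy_solved} then gives $b_L(H)=m+1=n$, which is the required system.

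The small cases can be checked directly. For $n=2$ the system is $\PG(1,2)$, a single triple. Its lazy burning number is $2$: two points of the triple burn the third, and a single point burns nothing. For $n=3$ we obtain $\PG(2,2)$, the Fano plane, whose lazy burning number is $3$, consistent with Theorem~\ref{LazyBurning3}.

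The value $n=1$ must be excluded, as the statement does. The trivial STS$(1)$ has lazy burning number $1$, but under the conventions here (orders $v\geq 3$, or at least a non-singleton system) no Steiner triple system has lazy burning number $1$: a single burned point cannot propagate, since propagation needs two burned points in a triple. This is why $1$ is removed from $\mathbb{Z}^+$.

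The only step needing any care is the reindexing $m=n-1$ together with confirming that $m\geq 1$, so that Theorem~\ref{thm:proj_lazy_solved} applies. Beyond that the corollary follows immediately.
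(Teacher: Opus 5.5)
Your proposal is correct and matches the paper, which presents the corollary as immediate from Theorem~\ref{thm:proj_lazy_solved}: for each $n\geq 2$, take $\PG(n-1,2)$. Your small-case checks and the remarks on $n=1$ are harmless but not needed, since the statement makes no claim about $n=1$, and whether the trivial STS$(1)$ counts is only a matter of convention.
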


We now prove the corresponding result for affine triple systems, which was previously mentioned by Hilton and Teirlinck~\cite{HiltonTeirlinck1980} without proof.   
\begin{theorem}
\label{thm:affi_lazy_solved}
For $n\geq 1$, $b_L(\AG(m,3))=m+1$.
\end{theorem}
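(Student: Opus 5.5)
We will prove $b_L(\AG(m,3))=m+1$ in the same two-part way as the projective case: an inductive upper bound followed by a counting lower bound based on the structure of generated subsystems. (The statement's hypothesis ``$n\geq 1$'' should be read as $m\geq 1$.)

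The key structural fact concerns the triples of $\AG(m,3)$. In $\mathbb{F}_3^m$ the condition $x\oplus y\oplus z=\mathbf{0}_m$ with $x,y,z$ distinct is equivalent to $z=-(x+y)=2x+2y$. Hence the triples are exactly the affine lines $\{x,\,x+d,\,x+2d\}$ with $d\neq \mathbf{0}_m$. It follows that a subset of $\mathbb{F}_3^m$ is closed under propagation if and only if it is closed under $(x,y)\mapsto 2x+2y$. Such sets (when nonempty) are exactly the affine subspaces. Indeed, if $C$ is closed and $a\in C$, then $W=C-a$ contains $\mathbf{0}_m$ and is closed under $(x,y)\mapsto 2x+2y$. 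Taking $y=\mathbf{0}_m$ shows $W$ is closed under $x\mapsto 2x=-x$. Combining, $W$ is closed under $(x,y)\mapsto -(2x+2y)=x+y$. Closure under addition and negation over $\mathbb{F}_3$ makes $W$ a linear subspace. The converse, that affine subspaces are closed, is immediate.

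Therefore, by Theorem~\ref{SmallestSubsystemBurns}, burning a set $S=\{s_0,\dots,s_k\}$ burns exactly the affine hull of $S$. That hull is $s_0+\mathrm{span}\{s_1-s_0,\dots,s_k-s_0\}$, which has at most $3^{k}=3^{|S|-1}$ points.

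For the lower bound, suppose $|S|\le m$. Then at most $3^{m-1}<3^m$ points burn, so $S$ is not a lazy burning set. Hence $b_L(\AG(m,3))\ge m+1$.

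For the upper bound, take $S=\{\mathbf{0}_m,e_1,\dots,e_m\}$. Its affine hull is $\mathbf{0}_m+\mathrm{span}\{e_1,\dots,e_m\}=\mathbb{F}_3^m$, so $S$ is a lazy burning set of size $m+1$. Alternatively, one can mirror the projective proof by induction. The base case $\AG(1,3)$ is a single triple with lazy burning number $2$. For the inductive step, a lazy burning set of the subsystem $\mathbb{F}_3^{m-1}\times\{0\}$ together with the point $e_m$ burns the sets $\mathbb{F}_3^{m-1}\times\{0\}$, then $\mathbb{F}_3^{m-1}\times\{2\}$ via the lines through $e_m$, and then $\mathbb{F}_3^{m-1}\times\{1\}$.

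The only real obstacle is the characterization of closed sets as affine subspaces. In particular, one must check that the negation step genuinely follows from a single burned base point, which uses characteristic $3$ ($2=-1$). Everything else is counting.
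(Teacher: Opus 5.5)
Your proof is correct and follows essentially the same route as the paper: both identify the generated subsystem $\langle S\rangle$ with a translate of a linear subspace of $\mathbb{F}_3^m$ and then count dimensions. The upper bound comes from $\{\mathbf{0}_m\}$ together with a basis in both versions. Your handling of the key step is cleaner than the paper's write-up. You correctly use that the third point of the block through $x,y$ is $-(x\oplus y)$, where the paper writes $u\oplus v$. You also build the translation into the affine-subspace characterization instead of first normalizing $\mathbf{0}_m\in S$ with a separate translation argument.
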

\begin{proof}
The points of $\AG(m,3)$ form the field $\mathbb{F}_3^m$. 
We note that for all $u$, if $S$ is a lazy burning set for $\AG(m,3)$, then so is $S_u=\{ s \oplus u : s\in S \}$. 
To see this, observe that $(s_1\oplus u) \oplus (s_2 \oplus u) = (s_1\oplus s_2)\oplus (u\oplus u) = (s_1 \oplus s_2) \oplus 2u$. 
It follows that if point $v$ is ever burned when starting with lazy burning set $S$, then $v\oplus 2u$ will be burned when starting with lazy burning set $S_u$. 
Since all vertices are burned when starting with lazy burning set $S$, it follows that all vertices are burned when starting with lazy burning set $S_u$. 
We may thus assume that there exists a minimum lazy burning set $S$ that includes the point $\mathbf{0}_m$. 

Given a subset $U$ of points in $\AG(m,3)$ which contains $\mathbf{0}_m$, we show that $\langle U \rangle$ is a linear subspace of $\mathbb{F}_3^m$.  If $u$ and $v$ are distinct elements of $\langle U\rangle$, then $\{u,v,u\oplus v\}$ is a block of $\AG(m,3)$, so that $u \oplus v$ burns, i.e.\ $u \oplus v \in \langle U \rangle$.  In particular, since $\mathbf{0}_m, u \in \langle U \rangle$, it follows that $2u\in\langle U\rangle$.  Clearly $0u$ and $1u$ are also in $\langle S\rangle$, and so closure by scalar multiplication is satisfied. 

Since $\mathbb{F}_3^m$ has dimension $m$ as a vector space, it has a basis $L$ of cardinality $m$.  Let $S=L \cup \{\mathbf{0}_m\}$.  We show that $S$ is a lazy burning set of cardinality $m+1$.  For any $u \in L$, as previously noted we have that $2u \in \langle S \rangle$.  Hence, since $\langle S \rangle$ is closed under addition, the subspace $\langle S \rangle$ contains the span of $L$, which is $\mathbb{F}_3^m$. 

If there exists a lazy burning set $B$ of cardinality $m$, by our previous argument we may assume that $\mathbf{0}_m \in B$.  It follows that $B \setminus \{\mathbf{0}_m\}$ is a spanning set for $\mathbb{F}_3^m$ of cardinality $m-1$, which is a contradiction.  Hence $S$ is a minimum lazy burning set.
\end{proof}

Before proceeding to consider the burning numbers of $\PG(n,2)$ and $\AG(n,3)$, we first define a modified type of burning sequence, in which the arsonist is not required to burn a new source in every round (in other words, the arsonist may pass); this idea will be useful in the subsequent theorems.  Specifically, let $U=(u_1, u_2, \ldots, u_k)$ be a sequence, where for each $\rho \in \{1, \ldots, k\}$, either $u_{\rho}$ is a vertex which is valid at round $\rho$ or $u_{\rho} = *$, where $*$ denotes the decision to pass.  We call the sequence $U$ a {\em generalized burning sequence}
if it leaves the hypergraph completely burned when the arsonist chooses $u_{\rho}$ as a source in all rounds $\rho$ for which $u_{\rho}$ is a valid vertex, and the arsonist burns no vertex in any round $\rho$ for which $u_{\rho}=*$.  

\begin{example} \label{GenBurnSeq-STS(9)}
The sequence $(1,2,4,*,*)$ is a generalized burning sequence for the STS$(9)$ with the following blocks.
\[
\begin{array}{llll}
\{1,2,3\} & \{1,4,7\} & \{1,5,9\} & \{1,6,8\} \\
\{4,5,6\} & \{2,5,8\} & \{2,6,7\} & \{2,4,9\} \\
\{7,8,9\} & \{3,6,9\} & \{3,4,8\} & \{3,5,7\} 
\end{array}
\]
In the first two rounds, only points $1$ and $2$ burn.  In the third round, point $3$ becomes burned by propagation and the arsonist burns point $4$.  In the fourth round, $7$, $8$ and $9$ burn by propagation, but the arsonist does not burn an additional source.  Finally, in the fifth round, $5$ and $6$ burn by propagation.  
\end{example}

The following lemma shows that introducing passing moves does not change the burning number.

\begin{lemma} \label{generalized burning sequence}
If $H$ has a generalized burning sequence of length $k$, then $b(H) \leq k$.
\end{lemma}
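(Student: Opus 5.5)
The plan is to convert a generalized burning sequence $U=(u_1,\ldots,u_k)$ into an ordinary burning sequence of length at most $k$. We replace each pass by an actual source and show that this only helps.

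The key tool is a monotonicity property of propagation. Suppose $F,F'$ are vertex sets with $F\subseteq F'$. Then every vertex that burns by propagation from $F$ in one round either lies in $F'$ already, or burns by propagation from $F'$. This holds because if $u\in e$ and $e\setminus\{u\}\subseteq F\subseteq F'$, then either $u\in F'$ or $u$ satisfies the propagation condition with respect to $F'$. Consequently, if the arsonist's burned sets are coordinatewise larger in every round, the burned sets stay nested: $F_U(\rho)\subseteq F_W(\rho)$ for all $\rho$.

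The construction builds a sequence $W=(w_1,\ldots,w_{k'})$ round by round, keeping $F_U(\rho)\subseteq F_W(\rho)$ as the invariant.
\begin{itemize}
\item If $F_W(\rho-1)=V(H)$ at some point, we stop. We have then found a burning sequence of length at most $\rho-1\le k$, since the last source chosen may have been redundant, which is allowed.
\item Otherwise, in round $\rho$ we put $w_\rho=u_\rho$ whenever $u_\rho\ne *$ and $u_\rho\notin F_W(\rho-1)$.
\item If instead $u_\rho=*$, or $u_\rho$ is already burned under $W$ (and so invalid for $W$), we let $w_\rho$ be an arbitrary valid vertex, that is, any vertex not in $F_W(\rho-1)$. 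Such a vertex exists because $H$ is not yet fully burned.
\end{itemize}

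Next we verify the invariant by induction on $\rho$, using the monotonicity property above. The base case is $F_U(0)=F_W(0)=\emptyset$. For the step, $F_U(\rho)$ consists of three parts: $F_U(\rho-1)$, the vertices propagated from $F_U(\rho-1)$, and $u_\rho$ if it is not a pass. The first two parts lie in $F_W(\rho)$ by induction and monotonicity. The source $u_\rho$ is either chosen as $w_\rho$ or was already in $F_W(\rho-1)$. Hence $F_U(\rho)\subseteq F_W(\rho)$.

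If $W$ reaches round $k$ without stopping, then $V(H)=F_U(k)\subseteq F_W(k)$, so $W$ is a burning sequence of length $k$. In either case $b(H)\le k$.

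The main subtlety is validity: a source prescribed by $U$ may already be burned under $W$, and after $H$ is fully burned no valid vertex remains. The rule that substitutes an arbitrary vertex, together with the early-stopping clause, handles both issues. Everything else is routine.
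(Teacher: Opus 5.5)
Your proof is correct, but it argues differently from the paper. The paper simulates $U$ exactly. It first deletes any pass made in a round in which nothing burns by propagation; such a round has $F(\rho)=F(\rho-1)$, so deleting it merely shifts the rest of the process. It then replaces every remaining pass by a vertex that burns by propagation in that same round, i.e.\ by a redundant source. Because the burned sets of $U'$ coincide with those of $U$ in every round, validity of the original sources and complete burning after $k$ rounds are immediate.

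You instead let each replacement be an arbitrary valid vertex and maintain only the inclusion $F_U(\rho)\subseteq F_W(\rho)$, using monotonicity of propagation. The cost is that a prescribed $u_\rho$ may already be burned under $W$, and that $W$ may finish before round $k$. Your substitution and truncation rules handle both of these correctly.

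The paper's route is shorter and needs no monotonicity lemma. Yours does not depend on how the replacements are chosen and needs no deletion step, since a pass in a round without propagation is simply filled by any unburned vertex. It also in effect proves the more general fact that burning extra vertices can never delay complete burning.
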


\begin{proof}
Let $U=(u_1, \ldots, u_k)$ be a generalized burning sequence for $H$.  Note that if for some $\rho$, $u_{\rho}=*$ and no vertices burn by propagation in round $\rho$, then removing this entry from $U$ creates a generalized burning sequence of length less than $k$.  Thus, we may assume that there is no such entry. 

Form a sequence $U'=(u_1',u_2',\ldots,u_k')$ as follows.  For $\rho \in \{1,\ldots,k\}$, if $u_{\rho}=*$, let $u_{\rho}'$ be any vertex that burns by propagation in round $\rho$; otherwise, set $u_{\rho}'=u_{\rho}$.  Then $U'$ is a burning sequence for $H$ of length $k$ (in which the $\rho^{\mathrm{th}}$ entry is a redundant source if $u_{\rho}=*$), so $b(H) \leq k$.
\end{proof}

We now determine the burning numbers of $\PG(n,2)$ and $\AG(n,3)$; generalized burning sequences will be used in the proofs of the upper bound on $b(\PG(n,2))$ and $b(\AG(n,3))$.  

\begin{theorem}
\label{cor:proj_regular_burning_solved}
If $n \geq 2$, then $b(\PG(n,2))=n+2$.
\end{theorem}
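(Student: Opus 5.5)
The proof splits into an upper bound $b(\PG(n,2))\le n+2$, via a generalized burning sequence and Lemma~\ref{generalized burning sequence}, and a matching lower bound that uses the linear structure of $\PG(n,2)$.

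\textbf{Lower bound.} Identify points with the nonzero vectors of $\mathbb{F}_2^{n+1}$. The key observation is that every burned vertex lies in the linear span of the sources chosen so far. At the end of round $\rho$, $F(\rho)\cup\{\mathbf{0}\}$ is contained in $\mathrm{span}(u_1,\dots,u_\rho)$. This follows by induction, since a vertex burned by propagation is the sum of two already-burned vertices. Consequently $f(\rho)\le 2^{\rho}-1$. This bound is not yet enough, because $2^{n+1}-1$ points could in principle burn by round $n+1$.

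To do better, track the set burned by propagation more carefully. I claim that
\[
F(\rho)\subseteq \mathrm{span}(u_1,\dots,u_{\rho-1})\cup\{u_\rho\}.
\]
Indeed, the propagation in round $\rho$ only produces sums of two elements of $F(\rho-1)$, and these lie in $\mathrm{span}(u_1,\dots,u_{\rho-1})$. Hence
\[
f(n+1)\le 2^{n}-1+1=2^{n}<2^{n+1}-1
\]
for $n\ge 1$, so $\PG(n,2)$ is not burned after $n+1$ rounds and $b(\PG(n,2))\ge n+2$. The only care needed is to confirm that the inductive claim survives redundant sources; it does, since a redundant source already lies in the span.

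\textbf{Upper bound.} Take the generalized burning sequence $(e_1,e_2,\dots,e_{n+1},*)$. The aim is to show that after the final pass round everything is burned. Suppose inductively that at the end of round $\rho$, every nonzero vector of $\mathrm{span}(e_1,\dots,e_{\rho-1})$ is burned, together with $e_\rho$.

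In round $\rho+1$, propagation proceeds as follows. Each pair $x,e_\rho$ with $x$ a nonzero vector of $\mathrm{span}(e_1,\dots,e_{\rho-1})$ burns $x+e_\rho$. Along with $e_\rho$ itself, this yields all nonzero vectors of $\mathrm{span}(e_1,\dots,e_\rho)$. The new source $e_{\rho+1}$ is valid, because it lies outside that span. In round $n+2$ the same propagation burns all of $\mathbb{F}_2^{n+1}\setminus\{\mathbf{0}\}$.

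The base case is round $2$: $e_1$ is burned, and $e_2$ is chosen. So $n+2$ rounds suffice, and Lemma~\ref{generalized burning sequence} gives $b(\PG(n,2))\le n+2$. Alternatively, the pass can be replaced by a redundant source directly, since points certainly propagate in that round.

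\textbf{Main obstacle.} I expect the main obstacle to be the refined lower-bound claim. It must handle a source being chosen inside the current span; that case only helps the bound. It must also handle precisely which sums arise in each round. That step needs a careful induction on the statement ``$F(\rho-1)\subseteq\mathrm{span}(u_1,\dots,u_{\rho-1})$''.
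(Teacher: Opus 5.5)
Your proof is correct. The upper bound is essentially the paper's construction. The paper inducts on $n$: it takes a generalized burning sequence for the subsystem $\mathcal{D}\cong\PG(n-1,2)$ that ends in a pass, replaces that pass by $e_{n+1}$, and appends a new pass. Starting from $(0,1,2,*)$ for $\PG(2,2)$ (three non-collinear points, i.e.\ a basis), this produces, up to isomorphism, exactly your sequence $(e_1,\dots,e_{n+1},*)$. You just verify it directly by induction on rounds, which also spares the hand check of the base case.

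The genuine difference is the lower bound. The paper gets it in one line from $b_L(H)<b(H)$ (Theorem~\ref{cor:lazyburningUpperbound}) together with $b_L(\PG(n,2))=n+1$ (Theorem~\ref{thm:proj_lazy_solved}). That route rests on two earlier results and highlights the structural fact that $\PG(n,2)$ attains $b=b_L+1$. Your containment $F(\rho)\subseteq\mathrm{span}(u_1,\dots,u_{\rho-1})\cup\{u_\rho\}$ is in effect the reasoning behind $b_L<b$ (the last source contributes only itself), specialized to $\PG(n,2)$, with the lazy lower bound replaced by counting the span. It is self-contained. It also gives the stronger per-round bound $f(\rho)\le 2^{\rho-1}$ for every burning sequence. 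That shows the doubling growth $|F(\rho)|\ge\min\{v,2^{\rho-1}\}$ from the proof of Theorem~\ref{upper_bound_sts} is exactly optimal for projective systems.

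The ``obstacle'' you flag is not one. The containment is a plain induction with no case split on whether $u_\rho$ is redundant or already in the span. Since it holds for every $\rho\le n+1$, it excludes burning sequences of any length up to $n+1$, not only those of length exactly $n+1$. It also supplies the containment $F(\rho)\subseteq\mathrm{span}(e_1,\dots,e_\rho)$ that you implicitly use to know each new source $e_{\rho+1}$ is valid.
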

\begin{proof}
    To show that $b(\PG(n,2)) \leq n+2$, we proceed by induction on $n$.
    For the base case, we note that $\PG(2,2)$ is isomorphic to the unique STS$(7)$, which is, up to isomorphism, given below.
    \[
    \{0,1,3\}\quad \{1,2,4\}\quad \{2,3,5\}\quad \{3,4,6\}\quad \{0,4,5\}\quad \{1,5,6\} \quad\{0,2,6\}.
    \]
    It is not hard to see that $(0, 1, 2, 4)$ is a minimum burning sequence, so $b(\PG(2,2))=4$.
    We further note that the last source, $4$, is redundant, so that $(0,1,2,*)$ is a minimum generalized burning sequence for $\PG(2,2)$.
		
    For our inductive hypothesis we will now assume that $b(\PG(n-1,2)) \leq n+1$ and that $\PG(n-1,2)$ has a generalized burning sequence of length $k \leq n+1$ in which the $k^{\mathrm{th}}$ entry is $*$. 
    Let $\mathcal{D}$ be the subsystem of $\PG(n,2))$ with point set $(\mathbb{F}_2^{n} \setminus \{\mathbf{0}_n\})\times \{0\}$, and note that $\mathcal{D}$ is isomorphic to $\PG(n-1,2)$. Let $S=(x_1,\ldots,x_{k-1},*)$ be a generalized burning sequence for $\mathcal{D}$, where $k \leq n+1$; such a sequence exists by the inductive hypothesis.
    We let $S' = (x_1,\ldots,x_{k-1}, e_{n+1})$ and note that by using the burning sequence $S'$, all of $\mathcal{D}$ will have burned by round $k$. Further, for every $x\in\mathcal{D}$, we have the block $\{x\times \{0\}, x\times \{1\}, e_{n+1}\}$, and so every vertex will burn in the next round. Thus, $S''=(x_1,\ldots,x_{k-1},e_{n+1},*)$ is a generalized burning sequence for $\PG(n,2))$.  By Lemma~\ref{generalized burning sequence}, $b(\PG(n,2)) \leq k+1 \leq n+2$.  

For the lower bound, note that by Corollary~2.7 of~\cite{introducing_hyp_burn} and Theorem~\ref{thm:proj_lazy_solved}, $b(\PG(n,2)) > b_L(\PG(n,2))=n+1$.
\end{proof}

\begin{theorem}
\label{th:affine_burning_number}
    For all $m \geq 2$, $b(\AG(m,3)) = m+3$.
\end{theorem}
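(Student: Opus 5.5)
We prove the two bounds separately. The lower bound comes from an affine-layer argument. The upper bound comes from an explicit generalized burning sequence combined with Lemma~\ref{generalized burning sequence}.

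\textbf{Lower bound.} By Theorem~\ref{cor:lazyburningUpperbound} and Theorem~\ref{thm:affi_lazy_solved}, $b(\AG(m,3)) > b_L(\AG(m,3)) = m+1$, so it suffices to rule out a burning sequence $(s_1,\ldots,s_{m+2})$.

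The key observation is that every block $\{x,y,z\}$ satisfies $z = -x-y$. Hence every affine subspace $A$ of $\mathbb{F}_3^m$ is closed under propagation: two points of $A$ never set fire to a point outside $A$. By induction on $\rho$, we get $F(\rho)\subseteq \mathrm{aff}(s_1,\ldots,s_\rho)$.

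Let $A' = \mathrm{aff}(s_1,\ldots,s_m)$, which has dimension at most $m-1$, and write $\mathbb{F}_3^m$ as the disjoint union of three parallel cosets $A'$, $A_1$, $A_2$ of $A'$. (If $\dim A'<m-1$, enlarge $A'$ to a hyperplane first; this only strengthens the argument.)
\begin{itemize}
\item Since $F(m)\subseteq A'$, and pairs inside $A'$ stay in $A'$, we get $F(m+1)\subseteq A'\cup\{s_{m+1}\}$. We may assume $s_{m+1}\in A_1$; if it lies in $A'$, everything stays in $A'$ plus one extra source, which cannot cover $3^m$ points.
\item In round $m+2$, propagation from pairs inside $A'$ stays in $A'$. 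A pair $\{s_{m+1},x\}$ with $x\in A'$ produces $-s_{m+1}-x$, which lies in $A_2$.
\item So no point of $A_1\setminus\{s_{m+1}\}$ burns by propagation in round $m+2$. This set has $3^{m-1}-1\ge 2$ points when $m\ge 2$, and only the single source $s_{m+2}$ can cover part of it.
\end{itemize}
Therefore the system is not fully burned after round $m+2$, and $b(\AG(m,3))\ge m+3$.

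\textbf{Upper bound.} For $m=2$, Example~\ref{GenBurnSeq-STS(9)} gives the generalized burning sequence $(1,2,4,*,*)$ of length $5$. For general $m$, I would use the generalized sequence
\[
(\mathbf{0}_m, e_1, e_2, \ldots, e_m, *, *)
\]
of length $m+3$, and apply Lemma~\ref{generalized burning sequence}.

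Write $W_j = \mathrm{span}(e_1,\ldots,e_j)$. The goal is to show that $W_{j}$ is fully burned a bounded number of rounds after $e_j$ is chosen. Two mechanisms drive this:
\begin{itemize}
\item If $W_{j-1}$ is burned and $e_j$ is burning, then pairs $\{w,e_j\}$ with $w\in W_{j-1}$ burn the coset $2e_j+W_{j-1}$ one round later.
\item Once both $W_{j-1}$ and $2e_j+W_{j-1}$ are burned, pairs $(w, 2e_j+w')$ burn the coset $e_j+W_{j-1}$ in the following round.
\end{itemize}
A naive invariant ``$W_{r-2}\subseteq F(r)$'' is too strong; the round-$4$ computation already shows $e_1+e_2$ and $2e_1+e_2$ are still missing. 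So I will use a lagged invariant of roughly the following form: at the end of round $r$,
\[
W_{r-3}\ \cup\ (2e_{r-2}+W_{r-3})\ \cup\ \{e_{r-2},\,e_{r-1}\}\ \subseteq\ F(r).
\]
The two mechanisms above then complete each coset within the next round, while the new source starts the next layer.

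Running the invariant to the end, the last source $e_m$ is placed in round $m+1$. The two passes in rounds $m+2$ and $m+3$ then fill first $2e_m+W_{m-1}$ and then $e_m+W_{m-1}$. At the same time, the lagging part of $W_{m-1}$ completes, since it needs only a bounded number of further rounds.

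\textbf{Main obstacle.} The main difficulty is pinning down the exact invariant so that the lag stays bounded by one round per layer and does not accumulate. This must yield full burning at exactly round $m+3$ rather than $m+4$. I would first verify the $m=2$ and $m=3$ cases by hand to calibrate the invariant. If the lag does accumulate, I would instead choose sources adapted to the already-burned set, such as $e_j + (\text{a burned point})$, so that the new layer meshes with the points burning in the same round.
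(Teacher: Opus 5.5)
Your lower bound is correct and is essentially the paper's argument. You confine $F(m)$ to a hyperplane via the affine hull of $s_1,\ldots,s_m$; the paper gets the same hyperplane from $b_L(\AG(m,3))=m+1$ and the structure of maximal subsystems. You then observe, as the paper does, that the first source in a new parallel class $A_1$ only propagates into $A_2$ in the following round.

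The gap is in the upper bound. Your sequence $(\mathbf{0}_m,e_1,\ldots,e_m,*,*)$ is, up to isomorphism, exactly the sequence the paper's induction produces. However, you never prove that it burns everything by round $m+3$, and the invariant you propose is false. For $m\ge 3$ one has $F(4)\cap W_2=W_2\setminus\{e_1+e_2,\,2e_1+e_2\}$. The only points of $2e_3+W_2$ that can burn in round $5$ are $-(e_3\oplus x)=2e_3-x$ with $x\in F(4)\cap W_2$. Hence $2e_1+2e_2+2e_3$ and $e_1+2e_2+2e_3$ are still unburned at the end of round $5$, contradicting $2e_{r-2}+W_{r-3}\subseteq F(r)$ at $r=5$. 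Your closing timing has the same defect. Burning all of $2e_m+W_{m-1}$ in round $m+2$ and then $e_m+W_{m-1}$ in round $m+3$ would require $W_{m-1}\subseteq F(m+1)$, which is the naive invariant you already showed fails. The remark that ``the lagging part completes in a bounded number of rounds'' is precisely the count that has to be pinned down to get $m+3$.

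The missing idea is that your second mechanism asks for too much. To burn $e_j+W_{j-1}$ you need all of $W_{j-1}$ but only \emph{one} point $p$ of $2e_j+W_{j-1}$, because $-(w\oplus p)$ runs over all of $e_j+W_{j-1}$ as $w$ runs over $W_{j-1}$. Such a point comes for free: $2e_j=-(\mathbf{0}_m\oplus e_j)$ burns in round $j+2$, one round after $e_j$ is chosen. This gives the clean invariant $W_j\subseteq F(j+3)$. If $W_{j-1}\subseteq F(j+2)$ and $e_j,2e_j\in F(j+2)$, then in round $j+3$ the blocks through $e_j$ burn $2e_j+W_{j-1}$ and the blocks through $2e_j$ burn $e_j+W_{j-1}$ \emph{simultaneously}. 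The case $j=1$ holds since $W_1=\{\mathbf{0}_m,e_1,2e_1\}\subseteq F(3)$. Each $e_j$ is valid in round $j+1$ because $F(j)\subseteq W_{j-1}$ by your closure observation. Taking $j=m$ and applying Lemma~\ref{generalized burning sequence} gives $b(\AG(m,3))\le m+3$.

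This is the paper's proof, phrased there as an induction on $m$. A generalized burning sequence $(v_1,\ldots,v_{s-2},*,*)$ for $\AG(m-1,3)$ becomes $(v_1,\ldots,v_{s-2},z,*,*)$ for $\AG(m,3)$, with $z$ in a new layer. The point $z'=-(v_1\oplus z)$ plays the role of your $2e_j$.
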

\begin{proof}
We first prove that $b(\AG(m,3)) \leq m+3$ by induction on $m$.  As a base case, consider $m=2$, so we need to show that $b(\AG(2,3))\leq 5$.  Note that $\AG(2,3)$ is isomorphic to the unique STS$(9)$.  Using the representation of this system given in Example~\ref{GenBurnSeq-STS(9)}, we have that $(1,2,4,*,*)$ is a generalized burning sequence of length $5$.  

Now suppose that $b(\AG(m-1,3)) \leq m+2$, and that there is a generalized burning sequence $S$ of length $s \leq m+2$ of the form $S=(v_1,v_2, \ldots, v_{s-2},*,*)$.  Consider $\AG(m,3)$ on point set $\mathbb{F}_3 \times \mathbb{F}_3^{m-1} \cong \mathbb{F}_3^m$.  Let $H \cong \AG(m-1,3)$ be the subsystem of $\AG(m,3)$ with point set $\{0\} \times \mathbb{F}_3^{m-1}$, and let $z \in \{1\} \times \mathbb{F}_3^{m-1}$. 
We claim that $(v_1, \ldots, v_{s-2},z,*,*)$ is a generalized burning sequence for $\AG(m,3)$.   Note that in round $s$, the vertex $z'=-(v_1\oplus z) \in \{2\} \times \mathbb{F}_{3}^{m-1}$ burns. Now, by the end of round $s$, by the induction hypothesis, every vertex of $H$ will already have burned.  Furthermore, each vertex of $\{1\} \times \mathbb{F}_3^{m-1}$ has the form $-(h\oplus z')$ for some $h \in \{0\}\times \mathbb{F}_3^{m-1}$ and each vertex of $\{2\} \times \mathbb{F}_3^{m-1}$ has the form $-(h\oplus z)$ for some $h \in \{0\}\times \mathbb{F}_3^{m-1}$.  Thus, every remaining vertex will burn in round $s+1$.  
Thus by induction we see that $\AG(m-1,3)$ has a generalized burning sequence of length at most $m+2$, and so by Lemma~\ref{generalized burning sequence}, $b(\AG(m-1,3)) \leq m+2$. 

For the lower bound, consider the subset $W$ of points that are burning after $m$ rounds.  Since $b_L(\AG(m,3))=m+1$ by Theorem~\ref{thm:affi_lazy_solved} and only $m$ sources have been chosen, the subsystem $\langle W \rangle$ generated by $W$ is a proper subsystem of $\AG(m,3)$.  Thus $W$ must be contained in some maximal proper subsystem $A\cong \AG(m-1,3)$ of $\AG(m,3)$. Without loss of generality, we assume that the point set of $A$ is $\mathbb{F}_3^{m-1} \times \{0\}$.  

Consider the first time a source $x$ outside $A$ is chosen, in round $\rho \geq m+1$.  Note that $x \notin \mathbb{F}_3^{m-1} \times \{0\}$, so without loss of generality we may suppose that $x \in \mathbb{F}_3^{m-1} \times \{1\}$.  In round $\rho+1$, the only vertices outside $A$ that are burned by propagation are those points $z$ in blocks of the form $\{x,y,z\}$ where $y \in \mathbb{F}_3^{m-1} \times \{0\}$; thus $z \in \mathbb{F}_3^{m-1} \times \{2\}$, so that no point of $(\mathbb{F}_3^{m-1} \times \{1\}) \setminus \{x\}$ can burn until at least round $\rho+2$.  It follows that the earliest round in which all points of $\AG(m,3)$ can burn is $m+3$.
\end{proof}

Corollary~2.7 of~\cite{introducing_hyp_burn} implies that any Steiner triple system $H$ of order $v \geq 3$ satisfies $b(H) \geq b_L(H)+1$.  This lower bound on the burning number is met for the projective triple systems but not for the affine triple systems, which have $b(\AG(m,3)) = b_L(\AG(m,3))+2$.

\section{Discussion}
\label{sec:Discussion}

In this paper, we have considered the burning and lazy burning processes on hypergraphs as they apply to Steiner triple systems.  There remain a number of avenues for further exploration on this subject, which we now discuss.

Theorem~\ref{thm:ConfinedBurningNumber} gives lower and upper bounds for the burning number of an STS$(v)$. The upper bound is tight, while the lower bound is tight to within a small additive constant; one could seek to eliminate this constant.  Further, these bounds provide an interval of possible burning numbers of an STS$(v)$, so one might ask if is it possible to find an STS$(v)$ with burning number $b$ for each value $b$ in this interval. 

In Section~\ref{sec:LazyBurning}, we summarized the literature on dimension of Steiner triple systems in the context of lazy burning. One salient open question is how the subsystem structure impacts the lazy burning number. Degenerate systems, which have subsystems with lazy burning number at least as big as the larger system, show that this interaction is non-trivial.

In Section~\ref{sec:AffineProjectiveA}, we considered the burning number of affine and projective triple systems. 
It is well known that the projective triple systems can be generated by repeated application of the so-called {\em doubling construction}. The doubling construction (technically the double-and-add-one construction) takes an STS$(v)$ and produces an STS$(2v+1)$; see \cite[Construction 2.15]{designs_handbook}. 
Similarly, the affine triple systems can be generated by repeated application of the so-called {\em tripling construction}; see \cite[Construction 2.17]{designs_handbook}, which produces an STS$(3v)$ from a given STS$(v)$. 
Zeitler~\cite{Zeitler1987} showed that the projective triple systems are the only non-degnerate STSs whose lazy burning numbers increase on doubling. Thus doubling an affine triple systems does not result in an increase of the lazy burning number. 
Since tripling an affine triple system $\AG(m,3)$ produces an $\AG(m+1,3)$, Theorem~\ref{thm:affi_lazy_solved} shows that tripling an affine triple system increases the lazy burning number by one. We conjecture that, in a similar fashion to projective triple systems with doubling, affine triple systems are the only non-degenerate systems whose lazy burning number increases on tripling.

Theorems~\ref{cor:proj_regular_burning_solved} and~\ref{th:affine_burning_number} also show that doubling a projective triple system or tripling an affine triple system increases the burning number by one.  We ask if these are the only non-degenerate systems whose burning numbers increase on doubling or tripling.
It would also be interesting to consider the effects of doubling and tripling on the (lazy) burning number of more general triple systems.

Other variations of burning have also been studied in the literature. For example, one variant called {\em cooling} asks for a sequence which burns a (hyper)graph as slowly as possible, see \cite{cooling} for details. 
It would be interesting to consider cooling for triple systems, or for hypergraphs more generally. 

One could further consider a spectrum of burning processes, from slowest to fastest. This idea is formalized in {\em liminal} burning~\cite{liminal}, which generalizes both burning and cooling. 
Liminal burning for triple systems, or for hypergraphs more generally, would be another interesting area of enquiry. 

\section*{Acknowledgements}

Authors Burgess, Danziger and Pike acknowledge NSERC Discovery Grant support (grant numbers RGPIN-2025-04633, RGPIN-2022-03816 and RGPIN-2022-03829, respectively) and Jones acknowledges NSERC and AARMS scholarship support.

\end{document}